\newenvironment{customlegend}[1][]{%
    \begingroup
    \csname pgfplots@init@cleared@structures\endcsname
    \pgfplotsset{#1}%
}{%
    \csname pgfplots@createlegend\endcsname
    \endgroup
}%
\def\addlegendimage{\csname pgfplots@addlegendimage\endcsname} 
\newlength\figureheight 
\newlength\figurewidth 
\def\addlegendimage{\csname pgfplots@addlegendimage\endcsname}
\newtheorem{definition}{Definition}
\newtheorem{example}{Example}
\def \-> {\rightarrow}
\def \argmin {\operatorname{argmin}}
\title{Optimal Design of Line Replaceable Units}
\author[1]{Joni Driessen, Joost de Kruijff}
\affil[1]{\footnotesize School of Industrial Engineering, Eindhoven University of Technology}
\author[2]{Joachim Arts\thanks{corresponding author, e-mail:  joachim.arts@uni.lu}}
\affil[2]{{\footnotesize Luxembourg Centre for Logistics and Supply Chain Management, University of Luxembourg}}
\author[1]{Geert-Jan van Houtum}
\newtheorem{theorem}{Theorem}
\newtheorem{lemma}[theorem]{Lemma}
\theoremstyle{remark}
\newtheorem{remark}{\em\bf Remark}
\renewcommand{\baselinestretch}{1.25}
\begin{document}
{\renewcommand{\baselinestretch}{1.25}
\maketitle

\begin{abstract}
\noindent {\footnotesize
A Line Replaceable Unit (LRU) is a collection of connected parts in a system that is replaced when any part of the LRU fails. Companies use LRUs as a mechanism to reduce downtime of systems following a failure.
The design of LRUs determines how fast a replacement is performed, so a smart design reduces replacement and downtime cost. A firm 
must purchase/repair a LRU upon failure, and large LRUs are more expensive to purchase/repair. Hence, a firm seeks to design LRUs such that the average costs per time unit are minimized. 
We formalize this problem in a new model that captures how parts in a system are connected, and how they are disassembled from the system. Our model optimizes the design of LRUs such that the replacement (and downtime) costs and LRU purchase/repair costs are minimized. We present a set partitioning formulation for which we prove a rare result: the optimal solution is integer, despite a non--integral feasible polyhedron. Secondly, we formulate our problem as a binary linear program. 
The paper concludes by numerically comparing the computation times of both formulations and illustrates the effects of various parameters on the model's outcome.
}
\end{abstract}}

\renewcommand{\baselinestretch}{1.3}
{\noindent \footnotesize {\bf Keywords:} Line Replaceable Units, Integer Programming, Column Generation, Graph Theory}

\section{Introduction}

System failures are major frustrations for their users. The consequences of failures can vary from discomfort, to disutility, to direct cost penalties such as downtime cost. In particular, industries that rely on systems to render a service or to manufacture a product experience high downtime cost. For example, the downtime cost of a computer system of a brokerage company is roughly between \$100,000--\$1,000,000 per hour \citep{cnet} and the downtime cost in the semiconductor industry is in the order of magnitude of \$100,000 per hour \citep{edn2000}. 
Together, the cost of downtime and maintenance can constitute up to 70--80\% of a system's total life cycle costs \citep{oner2007life}. As a consequence, it is crucial for a company's profitability to minimize the cost of downtime and maintenance. Significant cost reductions can be realized during the design phase of a system with relatively little effort. \citet{asiedu1998product} show that 70--85\% of the total life cycle costs are determined during the design phase, even though costs accrued in this phase only accounts for 10--20\% of the life cycle costs \citep{norman1990life,saranga2006optimization,oner2007life}. 
Hence, practice and research explore various system design concepts that enable a reduction of the after--sales costs including downtime and maintenance costs. Such concepts include common components \citep{thonemann2000optimal,briant2004optimal,driessen2017optimal}, reliability and redundancy optimization \citep{oner2013redundancy,xie2014maximizing}, and a smart design of Line Replaceable Units \citep{puig2015defining}. 

A Line Replaceable Unit (LRU) is a collection of connected system parts that can be easily replaced when one of the parts in the LRU fails. An example of such a LRU is the wheel of a car that can be quickly replaced by a spare wheel incase of a tire puncture. 
The design of LRUs has a particularly large effect on downtime and maintenance, because they directly determine long a maintenance intervention lasts \citep{dhillon1999engineering,muckstadt2004analysis,birolini2007reliability,kumar2012reliability}. It is crucial for companies to design the LRUs in a smart way that minimizes the maintenance and downtime costs. Discussions with our project partners, ASML (a manufacturer in the semiconductor industry) and Dutch Railways (a maintenance, repair and overhaul company in the railway industry), indicate that this problem is highly relevant for their industries. Similarly, two case studies \citep{puig2015defining,vangeel2018improving} at Thales (a manufacturer in the defense industry)  and a case study \citep{vandeursen2020atool} at Canon (a manufacturer of industrial printing equipment) report the same relevance, and other examples where LRUs are deliberately designed include \citet{durand2001maintainability,brasseur2012inside,KLMEM,klauke2015incremental}. 
On a more general note, the problem of designing LRUs is relevant for firms that 
are responsible for operational aspects (downtime and the maintenance) of systems. Such a firm may be an Original Equipment Manufacturer (OEM) that closes service contracts with its customers, but it may also be a firm that sells Maintenance, Repair and Overhaul (MRO) as a service to the user. Examples of OEMs that consider the design of LRUs include PACCAR or Volkswagen Group (trucking industry), Airbus (aviation industry), and the aforementioned manufacturers ASML, Thales, and Canon; while examples of MROs that consider the design of LRUs include the aforementioned Dutch Railways and Air France Industries KLM Engineering \& Maintenance (aviation industry).

LRUs are designed based on a given system design in practice. At an OEM, who designs the systems itself, the engineers design the system and subsequently they define the LRUs. The problem of defining the LRUs for a given design of the system is called LRU design. If development time allows, the OEM's design department may redesign their system design based on the outcome of a LRU design, and subsequently determine the LRUs again. A MRO does not design the system itself and thus starts with a given design of the system, when designing the LRUs. Similar to the OEM, the outcome of the LRU design may lead a MRO to modify the initial system design and derive the LRUs from this revised design. This interactive process between system design and LRU design is a powerful concept to reduce the total expected costs for an OEM or MRO. 

%
A system consists of various critical parts that are all connected to each other. As soon as one of these parts fails, the entire system is down and the company incurs downtime cost. To reduce the downtime cost, it is essential to quickly restore the system to a functioning state by replacing the failed part. Firms typically group parts in LRUs such that each LRU can be replaced quickly if any of the parts in this LRU fails. 
As designers make larger LRUs, the purchase costs typically increase, because the LRU contains more parts and thus more value. The purchase cost can refer to the purchase cost of a new LRU or to the purchase cost of a repair depending on the context. The failure of any part in a LRU triggers the failure of the entire LRU. The LRU's failure rate therefore equals the sum of the failure rates of the parts in the LRU\footnote{This holds when the part's time between failures is exponentially distributed and represents a good approximation in other circumstances.}. Larger LRU incur higher purchase cost and fail more often than smaller LRUs.
%
%
Now, the challenge is to optimally design the LRUs such that the average costs per time unit are minimized. We address this problem in this paper when a LRU design is required to partition the parts into LRUs. The first reason for considering this partition constraint comes from interviews with our project partners. If parts are partitioned into LRUs, replacement ambiguity is avoided. Replacement ambiguity is the situation in which a part fails and it is contained in more than one LRU. The engineer can then decide what LRU to replace and this leads to each system being maintained differently, which is undesirable from an asset configuration management perspective. Secondly, partitioned LRUs simplify the OEM's/MRO's production and functional testing procedures of LRUs \citep[p.~19]{vangeel2018improving}. A third reason for a partitioning constraint is that it is in line with common practice in engineering for reliability and maintenance \citet[p.~154]{birolini2007reliability}. Nevertheless, the problem can also be solved without the partition constraint as shown in the appendix. In the remainder (Section \ref{sec:numerical_experiments}) we numerically illustrate that the partition restriction leads to very limited increased costs, while it significantly increases the practical applicability of the LRU design.
 
Our LRU design problem relates to multi--component maintenance research with structural dependencies. Structural dependency between parts occurs when some parts have to be replaced or removed before the failed part can be replaced. Practitioners frequently face this type of dependency, but the academic field studying it ``is wide open \ldots and there have only been a few articles published on this topic" \citep{nicolai2008optimal}. The problem of designing LRUs naturally falls into this class, and only two papers address this problem.
\citet{thomas1986survey} poses the question whether to replace the entire car, the engine or just the piston rings in case the piston rings need replacement. More recently, \citet{puig2015defining} have revisited the question posed by \citet{thomas1986survey} and propose a model to design LRUs based on a narrow set of potential LRUs. Both works start from a bill of materials structure, i.e., the system's structure is a tree. 
The major issue with tree structure is that it does not capture the structural dependencies between parts. Tree structures cannot model the connections that exist and that have to be broken in order to replace a failed part, i.e., the connections between parts need not adhere to a tree structure. By contrast, we 
focus on the connections (cables, hoses, bolt--nut connections etc.) between various parts in a system, i.e., we consider the structural dependencies between parts. We explicitly incorporate disassembly sequences that exist for maintenance based on connections. Literature typically considers part based disassembly sequences: Part B must be disassembled before part A can be disassembled \citep{defazio1987simplified, gupta1998product,lambert2007optimizing}. This means that \emph{all} connections that part B has with other parts have to be broken before the connections of part A can be broken that enable part A's replacement. We capture these disassembly dynamics by considering the connections between parts. The connection oriented disassembly sequence has a major benefit over the part based disassembly sequence: it allows \emph{a subset} of a part's connections to be broken. For instance, if we want to replace the part A, connection based disassembly sequences allow a subset of part B's connections to be broken; whereas the part based disassembly sequence forces  all connections of part B to be broken. Only breaking a subset of connections is common in practice -- as observed at our project partners, e.g. part B can be tilted after unmounting the bolts and leaving a hose still attached to this part. Subsequently, a part A can be reached and replaced without fully disconnecting part B. 

%
%

Modelling the disassembly of a system based on its connections and the subsequent disassembly sequence is new and it enables us to accurately model the time needed to replace any LRU. Thus, we endogenize the replacement time and therewith the replacement cost of a LRU. This contrasts with \citet{puig2015defining} where the replacement time and cost are exogenously given. Furthermore, our modeling considers the full set of potential LRUs, because a LRU -- in our model -- can be any combination of parts in the system, contrasting \citet{thomas1986survey} and \citet{puig2015defining} who must pre-define all potential LRUs and their corresponding parameter values. 

Another line of related research studies LRUs (which are called modules) from a systems engineering perspective, where ``a module is a unit whose parts are powerfully connected among themselves, and relatively weakly connected to parts in other units" \citep{baldwin2000design}. This literature stream typically describes a system in terms of parts that are connected to each other, and these connections are commonly depicted in a Design Structure Matrix (DSM) \citep{steward1981design}. However, this approach neglects the disassembly sequence dynamics that exist for the replacement of LRUs (or modules). \citet{papalambros1997model} manage to relate this line of research to the area of optimization. Most research in the DSM stream aims to define measures of modularity and optimize these. Such measures typically focus on the connections between parts, and the measures prefer a high number of intra--LRU connections and a low number of inter--LRU connections; see \citet{newcomb1998implications}, \citet{sharman2004characterizing}, \citet{sosa2007network}, and \citet{wilschut2017multilevel}. Optimization of of the defined measures is typically done by using genetic algorithms \citep{meier2006design,yu2007information} or simulated annealing \citep{thebeau2001knowledge}. The aforementioned research focuses on single product DSMs, whereas \citet{alvaro2011optimal} and \citet{kim2021designing} apply this approach to a product family.

Work on (dis)assembly sequencing also has similarities to our work, because this stream models the (dis)assembly sequence that exists between parts in much detail \citep{defazio1987simplified, gupta1998product,lambert2007optimizing}. 
Research in this area optimizes the (dis)assembly sequence. We do not optimize this sequence, but we consider it to be given and focus on optimizing the design of LRUs. 

Finally, our work relates to several operations research studies that consider the impact of modular design on operations. These studies are often combinatorial in nature and aim to design product configurations such that the demand for end products is met and the average costs per time unit are minimized; see for example \citet{swaminathan1998managing}, \citet{thonemann2000optimal}, and \citet{briant2004optimal}. 
The structure in their problems superficially resembles ours, because we also study configurations of parts, which are LRUs in our case. The main difference is that we model the connections between parts and the disassembly sequences that exist for maintenance, while research in this stream does not.

In this paper, we make the following contributions: we present (i) a novel way to represent a system with multiple parts that are connected to each other, and we incorporate the disassembly sequences that exist for maintenance based on connections rather than parts. Modeling the connections and disassembly sequences enables us to endogenize the downtime cost due to the replacement of a LRU containing the failed part. Next, we use our system description to define an optimization model -- called \textsc{LRU Design} -- that minimizes the sum of the replacement and purchase cost by optimizing the LRU designs. 

We provide (ii) a set partitioning formulation of \textsc{LRU Design} that allows for branch--and--price algorithms. 
Next, we prove (iii) that an optimal solution to the set partitioning formulation is integer. This result is rather remarkable, 
because the feasible polyhedron is not integral. There exist two problems that also posses this property: a minimax transportation problem \citep{ahuja1986algorithms} and a multi-period machine assignment problem \citep{zhang2006multi}. The majority of research typically shows the existence of an optimal integer solution by proving that the feasible polyhedron is integral (e.g. through total unimodularity), see for instance \citet{hillier1998optimal}, \citet{ball2003stochastic}, 
\citet{churchill2012determining}, \citet{gamvros2012multi}. 
Our integrality result cannot be established in this way due to the non--integral polyhedron. We define a so--called LRU cycle and prove that a solution that contains such a LRU cycle is suboptimal.
Subsequently, we study the matrix encoding of an optimal solution to prove that an optimal solution is integer.  
We believe that our proof approach is applicable and promising to other problems that can be formulated as set partitioning problems, because one's main effort would be to prove suboptimality of partitions that contain cycles. 

Fourth (iv), we focus on additive failure rates such that we obtain linear expressions that support the implementation of our models. We specify the set partitioning formulation under such additive failure rates and we formulate \textsc{LRU Design} as a binary non--linear program (BNLP), which we then transform into a binary linear program.

Finally, (v) we illustrate that the set partitioning formulation is suitable for large instances, and we study the effects of various parameters on the model's outcome. Moreover, we numerically show that the relative cost impact of introducing a partitioning constraint is very limited, while we strongly improve practical applicability of our model. 

The rest of this paper is organized as follows. In Section \ref{sec:model_description}, we discuss our system representation including the disassembly sequences, and the optimization model \textsc{LRU Design}. We present a set partitioning formulation of \textsc{LRU Design} in Section \ref{sec:analysis_cg}, and we prove that an optimal solution is integer for this formulation. In the succeeding part of the paper, we focus on additive failure rates (for implementation convenience) and we present the set partitioning formulation under this assumption. Moreover, we discuss a binary non--linear programming (BNLP) formulation of \textsc{LRU Design} in Section \ref{sec:bp}, which we subsequently linearize to obtain a binary linear program (BLP). Finally in Section \ref{sec:numerical_experiments}, we numerically compare the computation times of the BLP formulation to the set partitioning formulation, we illustrate the effects of various parameter perturbations on the model's outcome, and illustrate the limited cost impact of the partitioning constraint in our problem. We offer concluding remarks in Section \ref{sec:conclusions}. 

\section{Model}
\label{sec:model_description}

First this section explains how a system with parts an connections can be represented by means of an example. The generalization then follows from the example and we present the optimization model called \textsc{LRU Design}.

\subsection{An illustrative example}
Consider a laptop repair shop that repairs laptops by removing failed parts from the laptop and replacing the failed parts with new ones.
The repair shop's objective is to design the LRUs such that it minimizes the cost of repair time and procurement of new LRUs.
We consider the illustrative example of a laptop, because this system is technologically simple and many people have some familiarity with it. Bear in mind however that the model was designed for and has greater financial impact for large and technologically complex systems. Unfortunately, such a system lacks the familiarity of general readership. 

The laptop example is based on data for a Dell Precision 7710 laptop \citep{dell_site}. Each part has a purchase cost and a failure rate (in failures per year). Table \ref{tab:laptop_parts} lists the estimated purchase cost and fictitious failure rate for all parts (in failures per year). The purchase cost of a part is its price found online on websites such as amazon.com.

\begin{table}[htbp!]
\centering
\begin{tabular}{llrr}
\hline
Identifier	&	Part Name & Part Cost (\$)	& Failure rate (failures/year)	\bigstrut[t]\\
\hline
A	&	Battery	&	180	&	0.3\bigstrut[t]\\
B	&	Hard Disk Drive	&	170	&	0.2\\
C	&	Keyboard 	&	45	&	0.001\\
D	&	WLAN Card 	&	50	&	0.15\\
E	&	Palm Rest 	&	45	&	0.001\\
F	&	Speakers 	&	14	&	0.05\\
G	&	Heat Sink	&	75	&	0.1 \\
H	&	4 GB Video Card 	&	250	&	0.1\\
I	&	Display Housing 	&	40	& 0.001\\
J	&	Display Front Cover 	&	20	& 0.001 \\
K	&	Display Bezel 	&	170	& 0.25 \\
L	&	Motherboard	&	270	& 0.25 \\
M	&	Computer Base	&	50	&	0.001\\
\hline
\end{tabular}
\caption{Part identifier list}
\label{tab:laptop_parts}
\end{table}
Each of the parts is connected to other parts, e.g. the Palm Rest is screwed to the Computer Base, the Palm Rest is wired to the Motherboard, and the Palm Rest is screwed to the Keyboard. Thus, there exist connections $\{E,M\}$, $\{E,L\}$, and $\{C,E\}$. 
In the event the Palm Rest fails and one wishes to replace it individually, one has to break all the connections that the Palm Rest has with all other parts: $\{E,M\}$, $\{E,L\}$ and $\{C,E\}$. Breaking each connection takes a certain amount of time, which translates into costs by multiplying the time with a cost rate, e.g. the salary rate of the repair man or downtime penalty. When the failed Palm Rest has been disconnected from the system, a new and identical Palm Rest from stock is installed into the system by reconnecting all the connections that have been broken previously (in order to remove the failed Palm Rest). This re--establishing of connections also costs time and can be translated into costs as well. 
Finally, a new Palm Rest is purchased to replenish the stock.

All information about parts, connections, failure rates, purchase costs, and the costs of breaking and re--establishing connections can be represented in a weighted un-directed graph. The parts correspond to vertices, and the part connections correspond to the edges. Furthermore, the failure rates and the purchase costs are attributes of the vertices, and the costs for breaking and re--establishing a connection correspond to the weight of an edge in the graph. For the laptop example, this graph can be found by analyzing the Owner's manual \citep{dell_site}, and is given in Figure \ref{fig:connection_graph}. The cost of breaking and re--establishing a connection is an estimate and is depicted on the edges.

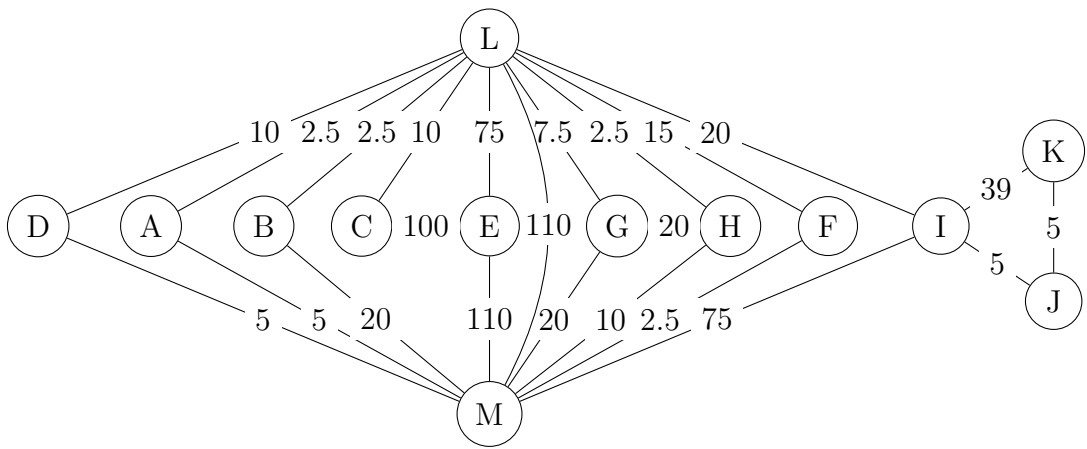
\begin{figure}[htbp!]
\centering
\begin{tikzpicture}[scale=1,transform shape]
\tikzset{VertexStyle/.style = {circle,draw,fill=white,minimum size=0.75cm}}
\tikzset{LabelStyle/.style = {draw=none,fill=white,text=black}}
\tikzset{EdgeStyle/.style   = {draw,fill=none}}
	\node[VertexStyle] (L) at (0,4.5) {L};
	\node[VertexStyle] (D) at (-6,2) {D};
	\node[VertexStyle] (A) at (-4.5,2) {A};
	\node[VertexStyle] (B) at (-3,2) {B};
	\node[VertexStyle] (C) at (-1.7,2) {C};
	\node[VertexStyle] (G) at (1.7,2) {G};
	\node[VertexStyle] (H) at (3.2,2) {H};
	\node[VertexStyle] (F) at (4.5,2) {F};
	\node[VertexStyle] (M) at (0,-0.5) {M};
	\node[VertexStyle] (I) at (6,2) {I};
	\node[VertexStyle] (K) at (7.5,3) {K};
	\node[VertexStyle] (J) at (7.5,1) {J};

	\draw[EdgeStyle] (G) to node[LabelStyle]{7.5} (L);
	\draw[EdgeStyle] (G) to node[LabelStyle]{20} (M);
	\tikzset{EdgeStyle/.append style = {bend left=28}}
	\draw[EdgeStyle] (L) to node[LabelStyle]{110} (M);
	\node[VertexStyle] (E) at (0,2) {E};	
	
	\tikzset{EdgeStyle/.style   = {draw,fill=none}}
	\draw[EdgeStyle] (A) to node[LabelStyle]{2.5} (L);
	\draw[EdgeStyle] (B) to node[LabelStyle]{2.5} (L);
	\draw[EdgeStyle] (C) to node[LabelStyle]{10} (L);
	\draw[EdgeStyle] (D) to node[LabelStyle]{10} (L);
	\draw[EdgeStyle] (E) to node[LabelStyle]{75} (L);
	\draw[EdgeStyle] (F) to node[LabelStyle]{15} (L);
	\draw[EdgeStyle] (H) to node[LabelStyle]{2.5} (L);
	\draw[EdgeStyle] (A) to node[LabelStyle]{5} (M);
	\draw[EdgeStyle] (B) to node[LabelStyle]{20} (M);
	\draw[EdgeStyle] (D) to node[LabelStyle]{5} (M);
	\draw[EdgeStyle] (M) to node[LabelStyle]{110} (E);
	\draw[EdgeStyle] (F) to node[LabelStyle]{2.5} (M);
	\draw[EdgeStyle] (H) to node[LabelStyle]{10} (M);
	
	\draw[EdgeStyle] (C) to node[LabelStyle]{100} (E);
	\draw[EdgeStyle] (G) to node[LabelStyle]{20} (H);
	
	\draw[EdgeStyle] (I) to node[LabelStyle]{75} (M);
	\draw[EdgeStyle] (I) to node[LabelStyle]{5} (J);
	\draw[EdgeStyle] (J) to node[LabelStyle]{5} (K);
	\draw[EdgeStyle] (I) to node[LabelStyle]{39} (K);
%
	\draw[EdgeStyle] (L) to node [LabelStyle]{20} (I);
\end{tikzpicture}
\caption{The laptop's connection graph}
\label{fig:connection_graph}
\end{figure}

We call the graph in Figure \ref{fig:connection_graph} the \emph{connection graph}. The connection graph may suggest that we only need to break connections $\{E,L\}$, $\{E,M\}$ and $\{C,E\}$ in order to remove the Palm Rest. 
However, the Owner's Manual states that in order to disconnect the Palm Rest, one must first break the connections that enable one to remove the Keyboard (C), the Hard Disk Drive (B), and the Battery (A); i.e., there is a disassembly sequence. This implies that there exists a collection of connections that needs to be broken prior to breaking the connections $\{E,L\}$, $\{E,M\}$, or $\{C,E\}$ \citep{dell_site}. 
Therefore there is a predecessor--successor relationship for breaking (and re--establishing) the connections depicted in Figure \ref{fig:connection_graph}. 
We model such predecessor--successor relationships in a separate directed graph, which we call the \emph{precedence graph}. An arc in the precedence graph from an edge $\{E,M\}$ to $\{E,L\}$ implies that connection $\{E,L\}$ must be broken before connection $\{E,M\}$ can be broken. Figure \ref{fig:precedence_graph} shows the precedence graph for the laptop \citep{dell_site}.
\begin{figure}[htbp!]
\centering
\begin{tikzpicture}[scale=1,transform shape]
\tikzset{VertexStyle/.style = {circle,draw,fill=white,minimum size=1.5cm}}
\tikzset{LabelStyle/.style = {draw=none,fill=none}}
\tikzset{EdgeStyle/.style   = {->,>=stealth, draw,fill=none}}
	\node[VertexStyle] (1) at (-3,6) {\{{D,L}\}};
	\node[VertexStyle] (2) at (-5,4) {\{{A,L}\}};
	\node[VertexStyle] (3) at (-3,4) {\{{B,L}\}};
	\node[VertexStyle] (4) at (-1,4) {\{{C,L}\}};
	\node[VertexStyle] (5) at (1,4) {\{{E,L}\}};
	\node[VertexStyle] (6) at (3,4) {\{{G,L}\}};
	\node[VertexStyle] (7) at (5,4) {\{{H,L}\}};
	\node[VertexStyle] (8) at (7,4) {\{{F,L}\}};
	\node[VertexStyle] (9) at (-1,0) {\{{C,E}\}};
	\node[VertexStyle] (10) at (3,2) {\{{G,H}\}};
	\node[VertexStyle] (11) at (-5,6) {\{{D,M}\}};
	\node[VertexStyle] (12) at (-5,0) {\{{A,M}\}};
	\node[VertexStyle] (13) at (-3,0) {\{{B,M}\}};
	\node[VertexStyle] (14) at (1,0) {\{{E,M}\}};
	\node[VertexStyle] (15) at (3,0) {\{{G,M}\}};
	\node[VertexStyle] (16) at (5,0) {\{{H,M}\}};
	\node[VertexStyle] (17) at (7,0) {\{{F,M}\}};
	\node[VertexStyle] (18) at (1,-2) {\{{I,M}\}};
	\node[VertexStyle] (19) at (5,-2) {\{{K,J}\}};
	\node[VertexStyle] (20) at (3,-2) {\{{I,J}\}};
	\node[VertexStyle] (21) at (7,-2) {\{{I,K}\}};
	\node[VertexStyle] (22) at (-1,-2) {\{{I,L}\}};
	\node[VertexStyle] (23) at (9,4) {\{{L,M}\}};

	\draw[EdgeStyle] (2) to node[LabelStyle]{} (12);
	\draw[EdgeStyle] (3) to node[LabelStyle]{} (2);
	\draw[EdgeStyle] (3) to node[LabelStyle]{} (13);
	\draw[EdgeStyle] (9) to node[LabelStyle]{} (4);
	\draw[EdgeStyle] (4) to node[LabelStyle]{} (3);
	\draw[EdgeStyle] (1) to node[LabelStyle]{} (11);
	\draw[EdgeStyle] (14) to node[LabelStyle]{} (5);
	\draw[EdgeStyle] (5) to node[LabelStyle]{} (9);
	\draw[EdgeStyle] (17) to node[LabelStyle]{} (8);
	\draw[EdgeStyle] (6) to node[LabelStyle]{} (10);
	\draw[EdgeStyle] (10) to node[LabelStyle]{} (15);
	\draw[EdgeStyle] (15) to node[LabelStyle]{} (14);
	\draw[EdgeStyle] (7) to node[LabelStyle]{} (16);
	\draw[EdgeStyle] (7) to node[LabelStyle]{} (6);
	\draw[EdgeStyle] (19) to node[LabelStyle]{} (20);
	\draw[EdgeStyle] (21) to node[LabelStyle]{} (19);
	\draw[EdgeStyle] (20) to node[LabelStyle]{} (18);
	\draw[EdgeStyle] (18) to node[LabelStyle]{} (22);
	\draw[EdgeStyle] (23) to node[LabelStyle]{} (17);
	\draw[EdgeStyle] (18) to node[LabelStyle]{} (14);
	\draw[EdgeStyle] (1) to node[LabelStyle]{} (3);
	
	\tikzset{EdgeStyle/.append style = {bend left}}
	\draw[EdgeStyle] (17) to node[LabelStyle]{} (14);
	
	\tikzset{EdgeStyle/.append style = {bend right=25}}
	\draw[EdgeStyle] (23) to node[LabelStyle]{} (1);
	\tikzset{EdgeStyle/.append style = {bend right=35}}
	\draw[EdgeStyle] (23) to node[LabelStyle]{} (7);
\end{tikzpicture}
\caption{The laptop's precedence graph}
\label{fig:precedence_graph}
\end{figure}

The combination of the precedence graph (Figure \ref{fig:precedence_graph}) with the connection graph (Figure \ref{fig:connection_graph}), enables us to list \textit{all} connections that need to be broken for the replacement of an arbitrary part. For example, replacement the Palm Rest requires one to break $\{E,M\}$, $\{E,L\}$, and $\{C,E\}$ (see Figure \ref{fig:connection_graph}), but to break connection $\{C,E\}$ one must first break the set of connections $\{\{A,L\}, \{A,M\},\{B,M\},\{B,L\},\{C,L\}\}$ (see Figure \ref{fig:precedence_graph}). Similarly, one can determine all connections that need to be broken prior to $\{E,M\}$ and $\{E,L\}$. Finally, one must break all connections $\{\{A,L\}, \{A,M\},\{B,M\},\{B,L\},\{C,E\},\{C,L\},\{E,L\},\{E,M\}\}$ in order to remove the Palm Rest (E). Analogously, one must break connections $\{\{A,L\}, \{A,M\},\{B,M\},\allowbreak\{B,L\},\{C,E\},\{C,L\}\}$ to replace the Keyboard.

If one decides to replace the Palm Rest (E) together with the Keyboard (C), i.e., define a LRU $Q$ that contains C and E. However, this implies that the engineer has to break all connections $\{\{A,L\}, \{A,M\},\{B,M\},\{B,L\},\{C,L\},\{C,E\},\{E,L\},\allowbreak\{E,M\}\}$ upon the failure of either the Palm Rest (E) or the Keyboard (C). As a consequence, one must break the expensive edges $\{E,L\}$ and $\{E,M\}$ more often than when the Palm Rest and the Keyboard are separate LRUs. Furthermore, the LRU $Q$ has a higher purchase cost as well as a higher failure rate compared to the Palm Rest and the Keyboard individually. Therefore, it is better to keep the Palm Rest and the Keyboard as separate LRUs instead of combining these two into one LRU $Q$. It is now of interest to find the optimal design of LRUs that minimizes the sum of the replacement and purchase costs, based on the connection graph in Figure \ref{fig:connection_graph} and the precedence graph in Figure \ref{fig:precedence_graph}. 

\subsection{A generic model}
\label{sec:optimization_model}
The example above illustrates how a system is built up and what relationships parts and connections have. The approach used for the laptop also applies to more complicated systems such as a bogie in a train (Bombardier/Dutch Railways), a positioning module in a lithography system (ASML), a truck engine (PACCAR/Volkswagen Group), or a jet engine (Pratt \& Whitney). Consider a system that consists of multiple parts, and assume that maintenance is done upon the failure of a part. 
Moreover, assume that one can accurately and instantaneously determine which part has failed, when the system fails as a whole. 
The system is defined by two graphs: a weighted undirected connection graph $G$ and a directed precedence graph $D$. 
The graph $G=(V,E)$ is characterized by the set of vertices $V$ and the set of edges $E$. The former set $V$ corresponds to the parts in the system, and the latter set $E$ corresponds to the connections between parts. Furthermore, each part in $G$ has a purchase cost $\ell:V\rightarrow \mathbb{R}_+$, where $\mathbb{R}_+=\{x\in\mathbb{R} \,|\, x>0\}$, and the failure rate for any subset $Q\subseteq V$ is given by $\lambda:2^{V}\rightarrow\mathbb{R}_+$. We also assume that the failure rate function is superadditive, i.e., $\lambda(R)+\lambda(T)\leq \lambda(R\cup T), \, \forall R,T\subseteq V, R\cap T =\emptyset$. 
The cost to break a connection are given by the edge costs $w:E\rightarrow \mathbb{R}_+$. We use the terms part and vertex interchangeably, as well as the terms connection and edge. At the end of this section, we discuss 
what happens when LRUs are repaired rather than purchased, see Remark \ref{rem:repair_LRU}.

Besides the connection graph $G$, the precedence graph $D=(E,A)$ is an unweighted acyclic directed graph that captures the disassembly sequences of the connections $e\in E$. The set $A$ corresponds to the set of arcs, and an arc $(i,j)\in A$ from edge $i$ to edge $j$ exists if and only if edge $j$ has to be broken \textit{before} edge $i$ can be broken. 
We assume that arcs can only connect adjacent edges, i.e., all arcs in $A$ satisfy $(\{u,v\},\{v,x\})\in A: u,v,x\in V$ and $u\neq v \neq x$. 
The graph $D$ determines a set $H(e)$ of successor edges for each edge $e\in E$. This set $H(e)$ consists of all edges including the edge $e\in E$ that must be disconnected in order to break $e$, and it can be determined by using the polynomial time Algorithms \ref{alg:derive_H} and \ref{alg:derive_degree} in Appendix \ref{app:algorithms}. We remark that $H(e)$ is a directed tree rooted at $e\in E$.

Further, we assume that $G$ is connected, without loss of generality. If $G$ is not connected, there do not exist arcs $(\{u,v\},\{x,y\})\in A$ such that $u,v$ are in one connected component and $x,y$ are in the other connected component. This follows because all arcs in $A$ satisfy $(\{u,v\},\{v,x\})\in A:u,v,x\in V$ and $u\neq v\neq x$. Hence, if $G$ were disconnected, we apply our model to each connected component of $G$ with the precedence graph induced by the connected component. 
We define a LRU design as partition $S$ of the vertices $V$. Connections need to be broken in order to replace a LRU $Q\in S$ from the system. First, define the set $B(Q)=\{\{u,v\}\in E: u\in Q, v\in V\setminus Q\}$, as the set of all edges that connect the LRU to the other parts of the system not in the LRU. That is, the set $B(Q)$ contains the edges that cross the LRU's boundary. Next, for the removal of LRU $Q$, one must break all the edges $e\in B(Q)$, as well as all the edges that need to be broken prior to breaking any edge $e\in B(Q)$. Hence, $\Gamma(Q)=\bigcup_{e\in B(Q)}H(e)$ is the set of edges that need to be broken in order to replace a LRU $Q\in S$.
Note that $\Gamma(Q)$ may contain edges between vertices in $Q$. This is a model feature as it allows us to model LRUs such as a chain between two cogwheels. If each each link in a chain is a vertex that is connected to the adjacent vertices, then it is necessary to break the edge between two links in order to remove the chain from the cogwheels. A detailed example of this is provided in Appendix \ref{sec:app:chain}. 

Each LRU $Q$ has a purchase cost and failure rate. The purchase cost of a LRU is given by the sum of the purchase cost of all parts in the LRU, i.e., the LRU's purchase cost is given by $\sum_{v\in Q}\ell(v)$. We relax this assumption in Remark \ref{rem:repair_LRU}.  
The total failure rate of a LRU $Q\in S$ is denoted by $\lambda(Q)$. 

Next, we derive the cost expression for a LRU $Q\in S$. Upon the failure of LRU $Q$, one breaks all edges $e\in \Gamma(Q)$ resulting in the cost $\sum_{e\in \Gamma(Q)}w(e)$. Moreover, replacement LRU is purchased at cost $\sum_{v\in Q}\ell(v)$. The average cost per time unit of LRU $Q$ then satisfies
\begin{equation}
\omega(Q)=\lambda(Q)\left(\sum_{e\in \Gamma(Q)}w(e)+\sum_{u\in Q}\ell(u)\right).
\label{eq:cost_LRU}
\end{equation}

As a LRU design $S$ is a partition of $V$ and $Q\in S$, the total cost per time unit of a LRU design is given by
\begin{equation}
\begin{split}
\pi(S)&=\sum_{Q\in S}\omega(Q).
\end{split}
\label{eq:cost_partition}
\end{equation}

The \textsc{LRU Design} problem can now be simply stated as: What is the LRU Design $S$ that minimizes $\pi(S)$?

\textsc{LRU Design} has the property that each LRU $Q$ in optimal solution $S^*$ to \textsc{LRU Design} is a connected subgraph of $G$. 

\begin{lemma}
Each LRU $Q\in S^*$ is a connected subgraph of $G$, for any optimal solution $S^*$ to \textsc{LRU Design}. 
\label{lem:connected_LRU}
\end{lemma}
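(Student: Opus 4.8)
The plan is to argue by contradiction. Suppose $S^*$ is optimal but contains a LRU $Q$ whose induced subgraph $G[Q]$ is disconnected. Let $Q_1$ be one connected component of $G[Q]$ and set $Q_2 = Q \setminus Q_1$; then $Q_1, Q_2$ are nonempty, disjoint, $Q_1 \cup Q_2 = Q$, and no edge of $E$ joins a vertex of $Q_1$ to a vertex of $Q_2$. Form the partition $S' = (S^* \setminus \{Q\}) \cup \{Q_1, Q_2\}$ and aim to show $\pi(S') < \pi(S^*)$, contradicting optimality.

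The key structural observation is that, because no edge runs between $Q_1$ and $Q_2$, every edge with one endpoint in $Q_i$ and the other outside $Q_i$ in fact has that other endpoint outside all of $Q$; hence $B(Q_i) \subseteq B(Q)$ for $i = 1,2$. Taking unions of the successor sets $H(e)$ over these boundary edges then gives $\Gamma(Q_i) \subseteq \Gamma(Q)$, so $\sum_{e\in\Gamma(Q_i)} w(e) \le \sum_{e\in\Gamma(Q)} w(e)$ since $w > 0$. Moreover $\sum_{u\in Q_i}\ell(u) < \sum_{u\in Q}\ell(u)$ strictly, because $Q \setminus Q_i$ is nonempty and $\ell > 0$. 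Consequently, writing $c(Q) = \sum_{e\in\Gamma(Q)} w(e) + \sum_{u\in Q}\ell(u) > 0$, each bracket $\sum_{e\in\Gamma(Q_i)} w(e) + \sum_{u\in Q_i}\ell(u)$ is strictly smaller than $c(Q)$ for $i=1,2$.

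Now combine this with positivity and superadditivity of $\lambda$: using $\lambda(Q_i) > 0$ and the strict bracket bounds, $\omega(Q_1) + \omega(Q_2) < \lambda(Q_1) c(Q) + \lambda(Q_2) c(Q) = (\lambda(Q_1) + \lambda(Q_2))\, c(Q) \le \lambda(Q)\, c(Q) = \omega(Q)$, where the last step applies superadditivity to the disjoint pair $Q_1, Q_2$ and the whole chain is legitimate because $c(Q) > 0$. Since $S'$ agrees with $S^*$ outside $Q$, we get $\pi(S') = \pi(S^*) - \omega(Q) + \omega(Q_1) + \omega(Q_2) < \pi(S^*)$, the desired contradiction. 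Iterating (or a short induction on the number of components) covers $G[Q]$ with more than two components, though peeling off one component at a time already suffices.

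I do not expect a serious obstacle here; the only delicate point is the bookkeeping in the middle step: one must verify that the inclusions $B(Q_i) \subseteq B(Q)$, hence $\Gamma(Q_i) \subseteq \Gamma(Q)$, genuinely use the absence of $Q_1$--$Q_2$ edges, and that at least one strict inequality survives when the two LRU costs are added. It is exactly this strictness, together with $\ell > 0$, that yields the stated form ``every LRU in \emph{every} optimal solution is connected'' rather than merely ``some optimal solution has all LRUs connected.'' Notice that the precedence structure enters only through monotonicity of $\Gamma(\cdot)$ under inclusion of boundary sets, so the argument is insensitive to how $H(\cdot)$ is computed.
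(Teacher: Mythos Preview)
Your proof is correct and follows essentially the same approach as the paper: split a disconnected $Q$ along its connected components, use the absence of cross-component edges to get $B(Q_i)\subseteq B(Q)$ and hence $\Gamma(Q_i)\subseteq\Gamma(Q)$, obtain strictness from $\ell>0$, and close with superadditivity of $\lambda$. The only cosmetic difference is that the paper splits $Q$ into \emph{all} of its connected components simultaneously, whereas you peel off one component $Q_1$ and set $Q_2=Q\setminus Q_1$; your two-piece split already yields the contradiction, so the iteration you mention is unnecessary. If anything, your justification of $\Gamma(Q_i)\subseteq\Gamma(Q)$ via $B(Q_i)\subseteq B(Q)$ is more explicit than the paper's, which simply asserts the inclusion ``as $\mathcal{J}\subset Q$'' without noting that $\Gamma$ is not monotone under arbitrary subset inclusion.
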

\begin{proof}
Let $S^*$ be an optimal solution to \textsc{LRU Design}, and let $\mathscr{J}$ be the finite set of connected components in the subgraph induced by a LRU $Q\in S^*$. The set $\mathscr{J}$ partitions $Q$, $\mathscr{J}$ is finite because $Q$ is finite, and $|\mathscr{J}|\geq 1$. The case of $|\mathscr{J}|=1$ implies that $Q$ is connected, which satisfies our claim. Thus, we consider the case $|\mathscr{J}|\geq 2$ in the remainder, and observe that in this case $\nexists \{u,v\}\in E: u\in \mathcal{J}_1,v\in \mathcal{J}_2$ with $\mathcal{J}_1,\mathcal{J}_2\in\mathscr{J}$. Note further that $\Gamma(\mathcal{J})\subseteq \Gamma(Q)$ for a $\mathcal{J}\in\mathscr{J}$ as $\mathcal{J}\subset Q$. Thus, 
\begin{IEEEeqnarray*}{rCl}
\sum_{\mathcal{J}\in\mathscr{J}}\omega(\mathcal{J})	&=& \sum_{\mathcal{J}\in\mathscr{J}}\lambda(\mathcal{J})\sum_{e\in \Gamma(\mathcal{J})}w(e)+\sum_{\mathcal{J}\in\mathscr{J}}\lambda(\mathcal{J})\sum_{u\in \mathcal{J}}\ell(u)\\
						&\leq &\sum_{\mathcal{J}\in\mathscr{J}} \lambda(\mathcal{J})\sum_{e\in \Gamma(Q)}w(e) +\sum_{\mathcal{J}\in\mathscr{J}}\lambda(\mathcal{J})\sum_{u\in \mathcal{J}}\ell(u) \\
						&< &\sum_{\mathcal{J}\in\mathscr{J}} \lambda(\mathcal{J})\sum_{e\in \Gamma(Q)}w(e) +\sum_{\mathcal{J}\in\mathscr{J}}\lambda(\mathcal{J})\sum_{u\in Q}\ell(u) \\
						&\leq&\lambda(Q)\sum_{e\in \Gamma(Q)}w(e)+\lambda(Q)\sum_{u\in Q}\ell (u)=\omega(Q),
\end{IEEEeqnarray*}
where the first inequality follows from the fact that each $\Gamma(\mathcal{J})\subseteq \Gamma(Q),\; \forall \mathcal{J}\in\mathscr{J}$. 
The second inequality follows from the fact that $\mathcal{J}\subset Q,\; \forall \mathcal{J}\in\mathscr{J}$, and thus $\sum_{u\in \mathcal{J}}\ell(u)< \sum_{u\in Q}\ell(u),\; \forall \mathcal{J}\in\mathscr{J}$. The last inequality holds because $\mathscr{J}$ partitions $Q$ and the failure rate function $\lambda$ is superadditive. 
Hence, we obtain $\sum_{\mathcal{J}\in\mathscr{J}}\omega(\mathcal{J})<\omega(Q)$, where $\mathscr{J}$ partitions $Q$. However, this contradicts the optimality of $S^*$. Therefore, each LRU $Q\in S^*$ is a connected subgraph of $G$, for any optimal solution $S^*$ to \textsc{LRU Design}.  
\end{proof}

We will use Lemma \ref{lem:connected_LRU} throughout this paper. We conclude this section with two remarks that enable further generalization of \textsc{LRU Design}.
%
%
%
%
\begin{remark}
We assumed that we do not repair a LRU, and thus purchase a new one. If we relax this assumption and repair a failed part of a LRU offline, we incur a total repair cost per time unit of $\lambda(V)\sum_{v\in V}q(v)$, where $q(v)$ is the repair cost of part $v$. However, if we repair a part of a LRU, we have to test the entire LRU to see whether it functions again. This means that we have to test each part in the LRU, and thus $\ell(v)$ now represents the cost of testing part $v\in V$ offline. Larger LRUs, now, have more parts that need to be tested before the LRU is certified as repaired. The total repair cost per time unit is sunk as $\lambda(V)\sum_{v\in V}q(v)$ is independent of the LRU design, but we still have the testing cost per time unit of LRU $Q$ given by $\lambda(Q)\sum_{u\in Q}\ell(u)$. Hence, our model \textsc{LRU Design} still applies. 
%
%
%
%
%
\label{rem:repair_LRU}
\end{remark}

\section{Set partitioning formulation}
\label{sec:analysis_cg}
We formulate \textsc{LRU Design} as a set partitioning problem that allows for column generation (branch--and--price) algorithms. Then, we prove in Section \ref{sec:relationship_M_LPM} that an optimal solution to the relaxed master program is integer, even though the feasible polyhedron is \emph{not} integral. Finally, we present the column generating procedure in Section \ref{sec:solving_LPM} for solving the set partitioning formulation of \textsc{LRU Design}.

A LRU design $S$ consists of various non--intersecting LRUs $Q\in S$ that have been selected. Let $\mathscr{S}=2^V$ be the power set of $V$ from which LRUs can be selected; $\mathscr{S}$ contains all possible LRUs. Then, $S\subset \mathscr{S}$, and our objective is to determine which solution $S$ is optimal via column generation. A LRU $Q\in\mathscr{S}$ can equivalently be represented as a $(0,1)$ column with $|V|$ elements, where a $1$ indicates that a vertex is in the LRU $Q$ and a $0$ denotes that the vertex does not belong to the LRU $Q$. Hence, we consider the matrix entries $z_{vQ}$ that equal $1$ if $v\in Q$ and $0$ otherwise. Then, a column from the matrix $Z=(z_{vQ})$ corresponds to LRU $Q$, and we denote this column by $Z_Q$. Note that a column $Z_Q$ and the LRU $Q\subseteq V$ are equivalent representations of a LRU.

Let $x_Q$ be the indicator variable that denotes whether a LRU $Q\in \mathscr{S}$ is selected for the LRU design $S$. 
We denote $\boldsymbol{x}$ as the vector consisting of all entries $x_Q$. Given $\boldsymbol{x}$,
we can straightforwardly derive the solution $S$ to \textsc{LRU Design} by $S=\{Q\in\mathscr{S}:x_Q>0\}$. 
We remark that $S$ can equivalently be represented as the submatrix $\mathcal{Z}=\{Z_Q:x_Q>0\}$ of $Z$. 
Our objective is to determine the LRU design in terms of $x_Q$ such that the average costs per time unit are minimized, and each part $v\in V$ is included in exactly one LRU. We capture this in the Master Problem (M):
%
%
%
%
\begin{IEEEeqnarray}{lllllll}
\interdisplaylinepenalty=0
\IEEEyesnumber\label{eq:mp1} \IEEEyessubnumber*
\mbox{(M)}	\qquad		&& \min_{\boldsymbol{x}}			&\quad	& \sum_{Q\in \mathscr{S}}\omega(Q)x_{Q}			&&\label{eq:mp1_obj}\\
						&& \mbox{s.t.}		&		& \sum_{Q \in \mathscr{S}}z_{vQ}x_{Q}= 1,		&\qquad &\forall v\in V, \label{eq:mp1_con_part}\\
						&&					&		& x_Q \in \{0,1\},								&&\forall Q\in \mathscr{S} \label{eq:mp1_con_integral}.
\end{IEEEeqnarray}
Recalling that $\omega(Q)$ is the average cost per time unit of using LRU $Q$, the objective function \eqref{eq:mp1_obj} minimizes the costs of using the selected LRUs, while constraints \eqref{eq:mp1_con_part} enforce that each part $v\in V$ is included in exactly one LRU $Q\in \mathscr{S}$. 
The set $\mathscr{S}$ is exponentially large, so straightforward optimization is not tractable. Therefore, we propose to solve the LP relaxation of M by column generation. We relax the integrality of $x_Q$ to obtain the LP relaxation of the Master Problem called LPM: 
\begin{IEEEeqnarray}{lllllll}
\interdisplaylinepenalty=0
\IEEEyesnumber\label{eq:mp1lpm} \IEEEyessubnumber*
\mbox{(LPM)}	\qquad		&& \min_{\boldsymbol{x}}			&\quad	& \sum_{Q\in \mathscr{S}}\omega(Q)x_{Q}			&&\label{eq:mp1_objlpm}\\
						&& \mbox{s.t.}		&		& \sum_{Q \in \mathscr{S}}z_{vQ}x_{Q}= 1,		&\qquad &\forall v\in V, \label{eq:mp1_con_partlpm}\\
						&&					&		& 0 \leq x_Q \leq 1 ,								&&\forall Q\in \mathscr{S} \label{eq:mp1_con_integrallpm}.
\end{IEEEeqnarray}
%

Subsequently, we present our procedure for solving LPM in Section \ref{sec:solving_LPM}.

\subsection{Integrality and polyhedral structure of LPM}
\label{sec:relationship_M_LPM}
We prove that an optimal solution to LPM is integer by considering a so--called LRU cycle. We show that if a given fractional solution contains a LRU cycle, there exists a feasible solution to LPM without the LRU cycle and strictly lower costs. This implies that an optimal solution does not contain a LRU cycle. 

Let $\tilde{x}$ be a fractional solution to LPM with $\tilde{S}=\{Q\in\mathscr{S}:\tilde{x}_Q>0\}$ (or equivalently $\tilde{\mathcal{Z}}=\{Z_Q:\tilde{x}_Q>0\}$) and such that each $Q\in\tilde{S}$ is a connected subgraph of $G$. Furthermore, let $x^*$ be an optimal solution to LPM with $S^*=\{Q\in\mathscr{S}:x^*_Q>0\}$ (or equivalently $\mathcal{Z}^*=\{Z_Q:x^*_Q>0\}$) and that also has connected LRUs $Q\in S^*$. Note that $x^*$ exists by Lemma \ref{lem:connected_LRU}.

\begin{definition}
A LRU cycle is a collection of LRUs $C=\{Q_1,Q_2,\ldots,Q_n\}$ such that each $Q_i$ is connected, 
$n \geq 3$ 
and for all $1\leq i\leq n$ we have $Q_i\cap Q_{i+1}\neq\emptyset$, 
$(Q_i\cap Q_{i+1})\setminus (Q_{i+1}\cap Q_{i+2})\neq\emptyset$, 
$(Q_{i+1}\cap Q_{i+2})\setminus (Q_i\cap Q_{i+1})\neq\emptyset$, 
with $n+1\equiv 1 \pmod{n}$ and $n+2\equiv 2 \pmod{n}$.
\end{definition}

%
%
For an example of a LRU cycle, we refer the reader to Figure \ref{fig:illustrate_lemma3}. We remark that there can exist a solution containing a LRU cycle such that the solution is an extreme point of the feasible polyhedron of LPM, and thus the feasible polyhedron of LPM is not integral.
Next, we prove that an optimal solution to LPM does not contain a LRU cycle: \ref{thm:intersection_cycle}.

\begin{theorem}
An optimal solution $x^*$ to LPM does not contain a LRU cycle.
\label{thm:intersection_cycle}
\end{theorem}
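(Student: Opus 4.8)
The plan is a local exchange argument: assuming $x^*$ is optimal and its support contains a LRU cycle, I would construct a feasible solution of strictly lower cost, contradicting optimality. First I would pick, among all LRU cycles contained in the support $S^*=\{Q:x^*_Q>0\}$, one with the fewest LRUs, say $C=\{Q_1,\dots,Q_n\}$; by Lemma~\ref{lem:connected_LRU} each $Q_i$ is connected. Minimality is then used to simplify the combinatorics: if two consecutive intersections met, one could splice out an intermediate LRU and obtain a shorter cycle, so $Q_{i-1}\cap Q_i\cap Q_{i+1}=\emptyset$ for all $i$ (the smallest cases and degenerate incomparabilities needing a separate check). Writing $I_i:=Q_i\cap Q_{i+1}$ and $D_i:=Q_i\setminus(Q_{i-1}\cup Q_{i+1})$, this yields the disjoint decomposition $Q_i=D_i\sqcup I_{i-1}\sqcup I_i$; in particular $Q_i=A_i\sqcup I_i$ with $A_i:=D_i\cup I_{i-1}$, and both $A_i$ and $I_i$ are nonempty.

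Next I would set up the perturbation. For small $\epsilon>0$ define $x'$ by $x'_{Q_i}=x^*_{Q_i}-\epsilon$, $x'_{A_i}=x^*_{A_i}+\epsilon$, $x'_{I_i}=x^*_{I_i}+\epsilon$ for every $i$ (all other components unchanged, and coinciding LRUs merged in the bookkeeping). Since the characteristic columns satisfy $\sum_i Z_{Q_i}=\sum_i Z_{A_i}+\sum_i Z_{I_i}$, a routine check of the covering constraints of LPM shows $x'$ is feasible: a vertex in some $D_i$ loses $\epsilon$ from $Q_i$ and regains it from $A_i$, while a vertex in $I_i$ loses $\epsilon$ from each of $Q_i,Q_{i+1}$ and regains $\epsilon$ from $I_i$ and $\epsilon$ from $A_{i+1}$; for $\epsilon\le\min_i x^*_{Q_i}$ all entries stay in $[0,1]$, and taking $\epsilon=\min_i x^*_{Q_i}$ drops some $Q_i$ from the support, destroying the cycle. (If an intermediate $A_i$ is disconnected this is harmless: by the argument inside the proof of Lemma~\ref{lem:connected_LRU}, replacing it by its connected components only lowers the cost, so I may use $\omega(A_i)$ as an upper bound.) Then $\pi(x')-\pi(x^*)=\epsilon\sum_i\bigl(\omega(A_i)+\omega(I_i)-\omega(Q_i)\bigr)$, so everything reduces to the combinatorial inequality $\sum_i\bigl(\omega(A_i)+\omega(I_i)\bigr)<\sum_i\omega(Q_i)$.

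To prove this I would split $\omega$ into its purchase part and its edge part. Since $\ell$ is additive and $\lambda$ is superadditive and strictly positive, for each $i$ one gets $\lambda(A_i)\ell(A_i)+\lambda(I_i)\ell(I_i)<\lambda(Q_i)\ell(Q_i)$ (strict because $I_i\neq\emptyset$ forces $\lambda(A_i)<\lambda(Q_i)$ while $\ell(A_i)>0$), so summing over $i$ already gives a strict gain on the purchase part. For the edge part, writing $w(F)=\sum_{e\in F}w(e)$, the only edges broken to remove $A_i$ or $I_i$ but not to remove $Q_i$ are the edges of the $A_i$–$I_i$ cut inside $Q_i$ together with their precedence-predecessors in $D$; since $I_i\subseteq Q_{i+1}$ while $A_i\cap Q_{i+1}=\emptyset$, that cut lies in $B(Q_{i+1})$, so all of these ``extra'' edges lie in $\Gamma(Q_{i+1})$. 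The precise, overlap-sensitive versions of this observation are containments such as $\Gamma(Q_i\cap Q_{i+1})\setminus(\Gamma(Q_i\cap Q_{i+1})\setminus\Gamma(Q_i))\subseteq\Gamma(Q_i)\setminus(\Gamma(Q_i\cap Q_{i-1})\setminus\Gamma(Q_{i-1}))$ and its three companions. Feeding these into $\lambda(A_i)w(\Gamma(A_i))+\lambda(I_i)w(\Gamma(I_i))$, using $\lambda(A_i),\lambda(I_i)\le\lambda(Q_i)$ on the parts that sit inside $\Gamma(Q_i)$ and re-charging the extra edges to $\Gamma(Q_{i+1})$, and then summing cyclically so the re-charged contributions telescope, should give $\sum_i\bigl(\lambda(A_i)w(\Gamma(A_i))+\lambda(I_i)w(\Gamma(I_i))\bigr)\le\sum_i\lambda(Q_i)w(\Gamma(Q_i))$. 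Adding the strict purchase part yields $\pi(x')<\pi(x^*)$, the desired contradiction. (As a cross-check: the identity $\sum_i Z_{Q_i}=\sum_i Z_{A_i}+\sum_i Z_{I_i}$ together with complementary slackness for $x^*$ forces $\sum_i(\omega(A_i)+\omega(I_i))\ge\sum_i\omega(Q_i)$, so establishing the strict reverse inequality alone is already enough.)

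The hard part is the edge-cost accounting. Splitting an LRU is \emph{not} unconditionally improving: removing a piece in isolation requires additionally breaking the internal connections between that piece and the rest of the LRU, together with all of their precedence-prerequisites, and these can outweigh the saving from a smaller failure rate. The reason for restricting to a \emph{minimal} LRU \emph{cycle} is precisely that it makes these extra edges attributable to a neighbouring LRU's own removal set, and keeps the relevant edge subsets disjoint enough that charging them around the cycle does not double-count — this is exactly what the $\Gamma$-of-intersection containments encode, and getting those containments (and the degenerate small-cycle cases) right is where the real work lies.
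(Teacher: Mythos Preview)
Your overall strategy — take a minimal LRU cycle in the support of an optimal $x^*$ and produce a strictly cheaper feasible solution — is exactly the paper's. The feasibility check for your perturbation, the strict purchase–cost inequality, and the observation that the ``extra'' edges produced by splitting $Q_i$ along $I_i=Q_i\cap Q_{i+1}$ all lie in $\Gamma(Q_{i+1})$ are all correct and match what the paper needs. You also write down precisely the containments that the paper proves as its key technical lemma (Lemma~\ref{lem:removal_path}).

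Where you diverge is in how the split is performed, and this is where your argument has a genuine gap. You split \emph{every} $Q_i$ simultaneously into $A_i$ and $I_i$ and then try to prove $\sum_i\bigl(\omega(A_i)+\omega(I_i)\bigr)\le\sum_i\omega(Q_i)$ on the edge part by ``re-charging the extra edges to $\Gamma(Q_{i+1})$'' and summing cyclically ``so the re-charged contributions telescope''. But the extra edges from the split of $Q_i$ carry coefficients $\lambda(A_i)$ and $\lambda(I_i)$, whereas the slack you would need to absorb them in the $Q_{i+1}$ term is governed by $\lambda(I_{i+1})$ (or $\lambda(A_{i+1})$), and there is no reason these are comparable. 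Nothing in your sketch shows that the cyclic re-charging closes up; ``should give'' is doing all the work, and you yourself flag this as where the real work lies.

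The paper sidesteps this difficulty by splitting \emph{one} LRU only, chosen carefully. Define
\[
W_j=\min\Bigl\{\,w\bigl(\mathcal{F}(Q_j\cap Q_{j+1},Q_j)\bigr),\ w\bigl(\mathcal{F}(Q_j\cap Q_{j-1},Q_j)\bigr)\,\Bigr\}
\]
and take $Q_i=\arg\min_j W_j$, say with $W_i=w(\mathcal{F}(Q_i\cap Q_{i+1},Q_i))$. Split only this $Q_i$ into $Q_i\cap Q_{i+1}$ and $Q_i\setminus Q_{i+1}$. The containments you quoted (Lemma~\ref{lem:removal_path}) then give
\[
w\bigl(\Gamma(Q_i\cap Q_{i+1})\bigr)\le w\bigl(\Gamma(Q_i)\bigr)-w\bigl(\mathcal{F}(Q_i\cap Q_{i-1},Q_{i-1})\bigr)+w\bigl(\mathcal{F}(Q_i\cap Q_{i+1},Q_i)\bigr),
\]
and similarly for $Q_i\setminus Q_{i+1}$ with $\mathcal{F}(Q_i\cap Q_{i+1},Q_{i+1})$ in place of $\mathcal{F}(Q_i\cap Q_{i-1},Q_{i-1})$. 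The point of the $\arg\min$ choice is that both $w(\mathcal{F}(Q_i\cap Q_{i-1},Q_{i-1}))$ and $w(\mathcal{F}(Q_i\cap Q_{i+1},Q_{i+1}))$ are terms appearing in $W_{i-1}$ and $W_{i+1}$ respectively, hence each is $\ge W_i=w(\mathcal{F}(Q_i\cap Q_{i+1},Q_i))$. So the added cut cost is dominated by the saved edge cost \emph{locally at $Q_i$}, and no cyclic bookkeeping is needed. Combined with your strict purchase-cost inequality (which works for a single split just as well), this yields $\omega(Q_i\cap Q_{i+1})+\omega(Q_i\setminus Q_{i+1})<\omega(Q_i)$ and hence the contradiction.

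In short: keep your setup and your purchase-cost argument, replace the simultaneous $\epsilon$-perturbation by a single split of the $\arg\min$-$W_j$ LRU, and the edge-cost step becomes a direct consequence of the containments you already identified.
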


The proof of this theorem uses a technical lemma that can be found in the appendix \ref{app:lemma_removal_path} as Lemma \ref{lem:removal_path} and the following two concepts. Define the set of edges that is broken for a LRU $X$ but not for a LRU $Y$ by $\mathcal{F}(X,Y)=\Gamma(X)\setminus\Gamma(Y)$. Furthermore, modular arithmetic is used for the indices of LRUs that form a cycle $Q_i$, $Q_{i-1}$, and $Q_{i+1}$ with $1\leq i\leq n$, $n+1\equiv 1 \pmod{n}$, and $Q_0\equiv Q_n$.

\proof{}
We show that a solution to LPM that contains a LRU cycle is suboptimal. Let $\tilde{x}$ be a solution to LPM such that each $Q\in\tilde{S}$ is connected and there exists a LRU cycle $C=\{Q_1,Q_2,\ldots,Q_n\}\subseteq\tilde{S}$ with minimal $n$. Note that a solution that contains a LRU cycle must be fractional. We prove that there exists a feasible solution $x^{\prime}$ to LPM with $S^{\prime}=\{Q\in\mathscr{S}:x^{\prime}_Q>0\}$ in which the LRU cycle $C$ does not exist and $\pi(S^{\prime})< \pi(\tilde{S})$. 

%
%
%
%
%
Let $W_j=\min\left\{\sum_{e\in \mathcal{F}(Q_j\cap Q_{j+1},Q_j)}w(e),\sum_{e\in \mathcal{F}(Q_j\cap Q_{j-1},Q_j)}w(e)\right\}$ for each LRU $Q_j$. Next, we consider a specific LRU $Q_i=\argmin_{Q_j\in C}\{W_j\}$ and we assume that $W_i=\sum_{e\in \mathcal{F}(Q_i\cap Q_{i+1},Q_i)}w(e)$ (later we consider $W_i=\sum_{e\in \mathcal{F}(Q_i\cap Q_{i-1},Q_i)}w(e)$). 
We create an alternative solution $x^{\prime}$ by partitioning $Q_i$ in $Q_i\cap Q_{i+1}$ and $Q_i\setminus Q_{i+1}$. That is, let the alternative solution $x^{\prime}$ be identical to $\tilde{x}$ except for the entries $x_{Q_i}^{\prime}=0$, $x_{Q_i\setminus Q_{i+1}}^{\prime}=\tilde{x}_{Q_i}$, and $x_{Q_i \cap Q_{i+1}}^{\prime}=\tilde{x}_{Q_i}$. We have
\begin{IEEEeqnarray*}{rCll}
\allowdisplaybreaks
\pi(S^{\prime})-\pi(\tilde{S})	&\leq&& \tilde{x}_{Q_i} \left(\lambda(Q_i \cap Q_{i+1})\sum_{e\in\Gamma(Q_i \cap Q_{i+1})}w(e)+\lambda(Q_i \cap Q_{i+1})\sum_{u\in Q_i\cap Q_{i+1}}\ell(u)  \right. \\
								&&&+\left. \lambda(Q_i \setminus Q_{i+1})\sum_{e\in\Gamma(Q_i \setminus Q_{i+1})}w(e)+\lambda(Q_i \setminus Q_{i+1})\sum_{u\in Q_i\setminus Q_{i+1}}\ell(u) \right.\\
								&&&\left. -\lambda(Q_i)\sum_{e\in \Gamma(Q_i)}w(e)-\lambda(Q_i)\sum_{u\in Q_i}\ell(u)\right) \\
								&< && \tilde{x}_{Q_i} \left(\lambda( Q_i \cap Q_{i+1})\sum_{e\in\Gamma(Q_i \cap Q_{i+1})}w(e) + \lambda(Q_i \setminus Q_{i+1})\sum_{e\in\Gamma(Q_i \setminus Q_{i+1})}w(e) \right.\\
								&&&\left.-\lambda(Q_i)\sum_{e\in \Gamma(Q_i)}w(e)\right)\\
								&\leq && \tilde{x}_{Q_i} \left(\left[\sum_{e\in\Gamma(Q_i \cap Q_{i+1})}w(e)-\sum_{e\in \Gamma(Q_i)}w(e)\right]\lambda(Q_i \cap Q_{i+1})\right. \\
								&&&\left.+ \left[\sum_{e\in\Gamma(Q_i \setminus Q_{i+1})}w(e)-\sum_{e\in \Gamma(Q_i)}w(e)\right]\lambda(Q_i \setminus Q_{i+1}) \right),
\end{IEEEeqnarray*}
where the first inequality follows from the superadditivity of $\lambda$ and the second inequality holds as $\lambda(Q_i \cap Q_{i+1})\sum_{u\in Q_i\cap Q_{i+1}}\ell(u) +\lambda(Q_i \setminus Q_{i+1})\sum_{u\in Q_i\setminus Q_{i+1}}\ell(u) < \lambda(Q_i)\sum_{u\in Q_i}\ell(u)$, because $Q_i\cap Q_{i+1}$ and $Q_i\setminus Q_{i+1}$ partition $Q_i$, $\lambda(Q)>0$ for all $Q\subseteq V$, and $\ell(v)>0$ for all $v\in V$. The last inequality follows after rearranging terms and using the superadditivity of the failure rate function $\lambda$. We continue by proving that the right hand side of the last equality is less than zero; i.e., we show that $\sum_{e\in\Gamma(Q_i\cap Q_{i+1})}w(e) \leq \sum_{e\in\Gamma(Q_i)}w(e)$ and $\sum_{e\in \Gamma(Q_i\setminus Q_{i+1})}w(e) \leq \sum_{e\in \Gamma(Q_i)}w(e)$. We have 
{\allowdisplaybreaks
\begin{IEEEeqnarray*}{rCll}
\lefteqn{\pi(S^{\prime})-\pi(\tilde{S})} \label{eq:suboptimal_LRU_cycle}\\
								&< && \tilde{x}_{Q_i} \left(\left[\sum_{e\in\Gamma(Q_i \cap Q_{i+1})}w(e)-\sum_{e\in \Gamma(Q_i)}w(e)\right]\lambda(Q_i \cap Q_{i+1}) \right.\\
								&&&\left. + \left[\sum_{e\in\Gamma(Q_i \setminus Q_{i+1})}w(e)-\sum_{e\in \Gamma(Q_i)}w(e)\right]\lambda(Q_i \setminus Q_{i+1}) \right)\\
								&= && \tilde{x}_{Q_i} \left(\left[\sum_{e\in\Gamma(Q_i \cap Q_{i+1})}w(e)-\sum_{e\in \mathcal{F}(Q_i\cap Q_{i-1},Q_{i-1})}w(e)-\sum_{e\in \Gamma(Q_i)\setminus \mathcal{F}(Q_i\cap Q_{i-1},Q_{i-1})}w(e)\right]\lambda(Q_i \cap Q_{i+1})\right.\\
								&&&\left. + \left[\sum_{e\in\Gamma(Q_i \setminus Q_{i+1})}w(e)-\sum_{e\in \mathcal{F}(Q_i\cap Q_{i+1},Q_{i+1})}w(e)-\sum_{e\in \Gamma(Q_i)\setminus \mathcal{F}(Q_i\cap Q_{i+1},Q_{i+1})}w(e)\right]\lambda(Q_i \setminus Q_{i+1})\right)\\
								&\leq && \tilde{x}_{Q_i} \left(\left[\sum_{e\in\Gamma(Q_i \cap Q_{i+1})}w(e)-\sum_{e\in \mathcal{F}(Q_i\cap Q_{i-1},Q_{i-1})}w(e)-\sum_{e\in \Gamma(Q_i\cap Q_{i+1})\setminus \mathcal{F}(Q_i\cap Q_{i+1},Q_i)}w(e)\right]\lambda(Q_i \cap Q_{i+1})\right.\\
								&&&\left. + \left[\sum_{e\in\Gamma(Q_i \setminus Q_{i+1})}w(e)-\sum_{e\in \mathcal{F}(Q_i\cap Q_{i+1},Q_{i+1})}w(e)-\sum_{e\in \Gamma(Q_i\setminus Q_{i+1})\setminus \mathcal{F}(Q_i\cap Q_{i+1},Q_i)}w(e)\right]\lambda(Q_i \setminus Q_{i+1}) \right)\\
								&\leq && \tilde{x}_{Q_i} \left(\left[\sum_{e\in\Gamma(Q_i \cap Q_{i+1})}w(e)-\sum_{e\in \mathcal{F}(Q_i\cap Q_{i+1},Q_i)}w(e)-\sum_{e\in \Gamma(Q_i\cap Q_{i+1})\setminus \mathcal{F}(Q_i\cap Q_{i+1},Q_i)}w(e)\right]\lambda(Q_i \cap Q_{i+1})\right.\\
								&&&\left. + \left[\sum_{e\in\Gamma(Q_i \setminus Q_{i+1})}w(e)-\sum_{e\in \mathcal{F}(Q_i\cap Q_{i+1},Q_i)}w(e)-\sum_{e\in \Gamma(Q_i\setminus Q_{i+1})\setminus \mathcal{F}(Q_i\cap Q_{i+1},Q_i)}w(e)\right]\lambda(Q_i \setminus Q_{i+1})\right)=0.
\end{IEEEeqnarray*}
}
The equality holds, because $\mathcal{F}(Q_i\cap Q_{i-1},Q_{i-1})\subset \Gamma(Q_i)$ and $\mathcal{F}(Q_i\cap Q_{i+1},Q_{i+1})\subset \Gamma(Q_i)$. The second inequality holds by Lemma \ref{lem:removal_path}. The last inequality follows because $Q_i=\argmin_{Q_j\in C}\{W_j\}$ and we assumed that $W_i=\sum_{e\in\mathcal{F}(Q_i\cap Q_{i+1},Q_i)}w(e)$. 
Hence, $x^{\prime}$ is a solution without the LRU cycle $C$ and satisfies $\pi(S^{\prime})< \pi(\tilde{S})$.

Next, consider the case $W_i=\sum_{e\in \mathcal{F}(Q_i\cap Q_{i-1},Q_i)}w(e)$. Then, we create the solution $x^{\prime}$ by partitioning $Q_i$ in $Q_i\cap Q_{i-1}$ and $Q_i\setminus Q_{i-1}$, and we follow the same procedure as above using Lemma \ref{lem:removal_path} and $W_i=\sum_{e\in \mathcal{F}(Q_i\cap Q_{i-1},Q_i)}w(e)$. Hence, the solution $x^{\prime}$ does not have the LRU cycle $C$ and satisfies $\pi(S^{\prime})< \pi(\tilde{S})$. 
%
%
\endproof

The result of Theorem \ref{thm:intersection_cycle} is illustrated by means of Example \ref{ex:illustrate_LRU_cycle}.

\begin{example}
Suppose we have the connection graph $G$ with $w(e)=1,\;\forall e\in E$ and the precedence graph $D$ from Figure \ref{fig:illustrate_lemma3}. Furthermore, we consider the solution $\tilde{x}$ with a LRU cycle $C\subseteq \tilde{S}$ as drawn by the dashed ellipses in Figure \ref{fig:illustrate_lemma3}. We illustrate our procedure for splitting LRU a $Q_i$ into $Q_i\cap Q_{i+1}$ and $Q_i\setminus Q_{i+1}$. 
\begin{figure}[htbp!]
\centering
\subcaptionbox{Connection Graph}{
\begin{tikzpicture}[scale=1,transform shape,rotate=90]
\tikzset{LabelStyle/.style = {draw=none,fill=none,text=black}}
\tikzset{EdgeStyle/.style   = {draw,fill=none}}

	\tikzset{VertexStyle/.style = {circle,draw=none,fill=white,rotate=-90}}
	\node[VertexStyle] (100) at (0,-1.85) {$Q_i$};
	\node[VertexStyle] (101) at (-4.85,3) {$Q_{i+1}$};
	\node[VertexStyle] (102) at (4.85,3) {$Q_{i-1}$};
	
	\tikzset{VertexStyle/.style = {circle,draw,fill=white,rotate=-90}}	
	\node[VertexStyle] (1) at (0,-1) {1};
	\node[VertexStyle] (2) at (-3,0) {2};
	\node[VertexStyle] (3) at (3,0) {3};
	\node[VertexStyle] (4) at (0,1) {4};
	
	\node[VertexStyle] (5) at (2,3) {5};
	\node[VertexStyle] (6) at (4,3) {6};
	\node[VertexStyle] (7) at (3,6) {7};
	
	\node[VertexStyle] (8) at (2,8) {8};
	\node[VertexStyle] (9) at (0,10) {9};
	\node[VertexStyle] (10) at (-2,8) {10};
	
	\node[VertexStyle] (11) at (-3,6) {11};
	\node[VertexStyle] (12) at (-2,3) {12};
	\node[VertexStyle] (13) at (-4,3) {13};
	
	\tikzset{EdgeStyle/.style   = {draw,fill=none}}
	\draw[EdgeStyle] (2) to node[LabelStyle]{} (1);
	\draw[EdgeStyle] (2) to node[LabelStyle]{} (4);
	\draw[EdgeStyle] (4) to node[LabelStyle]{} (3);
	\draw[EdgeStyle] (4) to node[LabelStyle]{} (12);
	\draw[EdgeStyle] (4) to node[LabelStyle]{} (5);
	\draw[EdgeStyle] (1) to node[LabelStyle]{} (3);
	\draw[EdgeStyle] (3) to node[LabelStyle]{} (6);
	\draw[EdgeStyle] (3) to node[LabelStyle]{} (5);
	\draw[EdgeStyle] (5) to node[LabelStyle]{} (7);
	\draw[EdgeStyle] (6) to node[LabelStyle]{} (7);
	\draw[EdgeStyle] (7) to node[LabelStyle]{} (8);
	\draw[EdgeStyle] (8) to node[LabelStyle]{} (9);
	\draw[EdgeStyle] (9) to node[LabelStyle]{} (10);
	\draw[EdgeStyle] (10) to node[LabelStyle]{} (11);
	\draw[EdgeStyle] (11) to node[LabelStyle]{} (12);
	\draw[EdgeStyle] (11) to node[LabelStyle]{} (13);
	\draw[EdgeStyle] (12) to node[LabelStyle]{} (2);
	\draw[EdgeStyle] (13) to node[LabelStyle]{} (2);
	\draw[EdgeStyle] (11) to node[LabelStyle]{} (5);

 	\draw[densely dashed] (0,0) ellipse (3.5 and 1.5);
	\draw[densely dashed] (3,3) ellipse (1.5 and 3.5);
	\draw[densely dashed] (-3,3) ellipse (1.5 and 3.5);
	\draw[densely dashed,rotate=33] (5.5,5.75) ellipse (1.5 and 3.5);
	\draw[densely dashed,rotate=-33] (-5.5,5.75) ellipse (1.5 and 3.5);

\end{tikzpicture}}

\vspace{1.5cm}

\subcaptionbox{Precedence Graph}{
\begin{tikzpicture}[scale=1,transform shape]
\tikzset{VertexStyle/.style = {circle,draw,fill=white,minimum size=1.75cm}}
\tikzset{LabelStyle/.style = {draw=none,fill=none}}
\tikzset{EdgeStyle/.style   = {->,>=stealth,draw,fill=none}}
	\node[VertexStyle] (1) at (0,0) {\{{5,11}\}};
	\node[VertexStyle] (2) at (2.5,0) {\{{11,12}\}};
	\node[VertexStyle] (3) at (5,0) {\{{12,2}\}};
	\node[VertexStyle] (4) at (7.5,0) {\{{2,4}\}};
	\node[VertexStyle] (5) at (10,0) {\{{4,3}\}};
	\node[VertexStyle] (6) at (12.5,0) {\{{3,5}\}};
	
	\draw[EdgeStyle] (1) to node[LabelStyle]{} (2);
	\draw[EdgeStyle] (2) to node[LabelStyle]{} (3);
	\draw[EdgeStyle] (3) to node[LabelStyle]{} (4);
	\draw[EdgeStyle] (4) to node[LabelStyle]{} (5);
	\draw[EdgeStyle] (5) to node[LabelStyle]{} (6);
\end{tikzpicture}}
\caption{The input graphs and a LRU cycle $C$}
\label{fig:illustrate_lemma3}
\end{figure}
In this example, we have 
{\allowdisplaybreaks
\begin{IEEEeqnarray*}{rCl}
\Gamma(Q_i)									&=&\{\{2,12\},\{2,13\},\{2,4\},\{3,4\},\{3,5\},\{3,6\},\{4,5\},\{4,12\}\},\\
\Gamma(Q_{i+1})								&=&\{\{1,2\},\{10,11\},\{4,12\},\{5,11\},\{11,12\},\{12,2\},\{2,4\},\{4,3\},\{3,5\}\},\\
\Gamma(Q_{i-1})								&=&\{\{1,3\},\{4,5\},\{7,8\},\{5,11\},\{11,12\},\{2,12\},\{2,4\},\{3,4\},\{3,5\}\},\\
\Gamma(Q_i\cap Q_{i-1})						&=&\{\{1,3\},\{4,3\},\{3,5\},\{3,6\}\},\\
\Gamma(Q_i\setminus Q_{i-1})				&=&\{\{1,3\},\{2,12\},\{2,13\},\{2,4\},\{3,4\},\{3,5\},\{4,5\},\{4,12\}\},\\
\mathcal{F}(Q_i\cap Q_{i+1},Q_{i+1})		&=&\{\{2,12\},\{2,13\}\},\\
\mathcal{F}(Q_i\cap Q_{i-1},Q_i)			&=&\{\{1,3\}\},\\
\mathcal{F}(Q_i\cap Q_{i-1},Q_{i-1})		&=&\{\{3,6\}\}. 
\end{IEEEeqnarray*}
}
One can use the above expressions to verify that the procedure in the proof of Theorem \ref{thm:intersection_cycle} yields a solution $x^{\prime}$ such that $\pi(S^{\prime})< \pi(\tilde{S})$. \hfill $\diamond$
\label{ex:illustrate_LRU_cycle}
\end{example}

%
%
Next, we introduce the concept of totally balanced matrices and Theorem \ref{thm:totally_balanced} that -- in combination with Theorem \ref{thm:intersection_cycle} -- helps us to prove that an optimal solution to LPM is integer. The definition of a totally balanced matrix can for instance be found in \citet{anstee1984characterizations} or \citet{hoffman1985totally}.

\begin{definition}
A binary matrix $\mathcal{R}$ is totally balanced if it does not contain a square submatrix $R$ that has no identical columns and the sum of each row and column equals to two. 
\label{def:totally_balanced}
\end{definition}

Given this definition, we can now introduce Theorem \ref{thm:totally_balanced}, stating that the matrix encoding of an optimal solution to LPM is totally balanced. Note that we do not consider the constraint matrix of LPM -- as is mostly done -- but we study the matrix encoding of an optimal solution to LPM.

\begin{theorem}
If an optimal solution $x^*$ does not contain a LRU cycle, then $\mathcal{Z}^*$ is totally balanced. 
\label{thm:totally_balanced}
\end{theorem}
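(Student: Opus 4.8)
The plan is to show the contrapositive within the standing hypothesis that $x^*$ is optimal: if $\mathcal{Z}^*$ is \emph{not} totally balanced, then $x^*$ contains a LRU cycle. By Definition \ref{def:totally_balanced}, non--total--balancedness of $\mathcal{Z}^*$ produces a square submatrix $R$ with no two identical columns in which every row sum and every column sum equals $2$. The first step is to read $R$ as the incidence pattern of a bipartite graph $H$: one colour class is the set of rows of $R$ (a subset of $V$), the other is the set of columns of $R$ (a subset of the LRUs in the support of $x^*$), and a row $v$ is joined to a column $Q$ exactly when the $(v,Q)$ entry of $R$ equals $1$. The row/column--sum condition makes $H$ $2$--regular, hence a vertex--disjoint union of cycles, each of even length at least $4$. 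A component of length $4$ would consist of two rows $v,v'$ and two columns $Q,Q'$ with all four edges present, so the $Q$-- and $Q'$--columns of $R$ would coincide; the ``no identical columns'' hypothesis therefore rules out $4$--cycle components, and so $H$ has a component that is a cycle of length $2n$ with $n\ge 3$.

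From such a component I would read off a LRU cycle directly. Relabel its vertices so the component is $Q_1 - v_1 - Q_2 - v_2 - \cdots - Q_n - v_n - Q_1$, with indices modulo $n$. Then $v_i\in Q_i\cap Q_{i+1}$ for every $i$, and since the row $v_i$ has exactly two $1$'s in $R$ --- in the columns $Q_i$ and $Q_{i+1}$ --- we get $v_i\notin Q_j$ for every other index $j$. (Here it is essential that $R$ is a \emph{submatrix} of $\mathcal{Z}^*$: a $0$ entry of $R$ equals the corresponding entry $z_{v_iQ_j}$ of $\mathcal{Z}^*$, which genuinely certifies $v_i\notin Q_j$.) I would then verify the three conditions in the definition of a LRU cycle for $C=\{Q_1,\dots,Q_n\}$: first, $Q_i\cap Q_{i+1}\ni v_i$, so $Q_i\cap Q_{i+1}\neq\emptyset$; second, $v_i\in(Q_i\cap Q_{i+1})\setminus(Q_{i+1}\cap Q_{i+2})$ because $v_i\notin Q_{i+2}$; third, $v_{i+1}\in(Q_{i+1}\cap Q_{i+2})\setminus(Q_i\cap Q_{i+1})$ because $v_{i+1}\notin Q_i$. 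The indices $i,i+1,i+2$ are pairwise distinct modulo $n$ precisely because $n\ge 3$, which is what the previous paragraph delivered. Finally, each $Q_i$ belongs to the support of the optimal solution $x^*$ and is therefore a connected subgraph of $G$ by Lemma \ref{lem:connected_LRU}. Hence $C$ is a LRU cycle contained in $x^*$ --- contradicting the hypothesis --- so $\mathcal{Z}^*$ must be totally balanced.

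I expect the only delicate part to be the combinatorial bookkeeping in the translation above: making airtight the chain ``$2$--regular bipartite $\Rightarrow$ disjoint even cycles'', ``no repeated columns $\Rightarrow$ no $4$--cycle $\Rightarrow n\ge 3$'', and checking that the three set--difference conditions follow merely from each row and each column of $R$ carrying exactly two ones, rather than from any geometric property of the LRUs. Nothing else is needed: the superadditivity of $\lambda$, the edge weights $w$, and Theorem \ref{thm:intersection_cycle} play no role here, since the statement concerns only the $0/1$ support pattern of $x^*$ together with the connectedness of its LRUs guaranteed by Lemma \ref{lem:connected_LRU}.
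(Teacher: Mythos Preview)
Your proposal is correct and follows the same contrapositive strategy as the paper: extract from a forbidden submatrix $R$ a cyclic pattern among its columns and verify that the corresponding LRUs form a LRU cycle. The paper carries this out by passing to a minimal $n\times n$ submatrix $\hat R$ of $R$ and tracing a cycle through an explicit, step--by--step relabelling of rows and columns (using minimality of $n$ to argue that the trace cannot close up early), eventually obtaining $\{v_i\}=Q_i\cap Q_{i+1}$. Your route is a cleaner packaging of the same combinatorics: reading $R$ as the biadjacency matrix of a $2$--regular bipartite graph immediately gives a disjoint union of even cycles, the ``no identical columns'' clause kills $4$--cycles, and any remaining component hands you the sequence $Q_1,v_1,\dots,Q_n,v_n$ with $n\ge 3$ directly. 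What your approach buys is transparency --- the cycle is produced in one line rather than via an inductive labelling --- while the paper's approach makes the implicit correspondence between matrix minimality and cycle--component selection more explicit. Both rely on Lemma~\ref{lem:connected_LRU} for the connectedness clause in the LRU--cycle definition, and neither needs Theorem~\ref{thm:intersection_cycle} or any cost parameters, exactly as you note.
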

\proof{}
By the definition of a totally balanced matrix the statement of this theorem is equivalent to: Given an optimal solution $x^*$ -- with $S^*$ or equivalently $\mathcal{Z}^*$ -- that does not contain a LRU cycle, there does not exist a binary $k\times k$ submatrix $R$ of $\mathcal{Z}^*$ with $k\geq 3$, no identical columns, and such that the sum of each row and column of $R$ equals to two. We prove this by the contraposition, i.e. we prove that if such a submatrix exists, the solution contains a LRU cycle.

To be more precise, we prove the following statement: 
Given a binary $k\times k$ matrix $R$ with $k\geq 3$, no identical columns, and such that each row and column sum to two, there exists an $n\times n$ submatrix $\hat{R}$ of $R$ with minimal $n\geq 3$, no identical columns, and such that each row and column of $\hat{R}$ sum to two. If we interpret the columns of $\hat{R}$ as LRUs and the rows of $\hat{R}$ as vertices, then the LRUs -- corresponding to the columns of $\hat{R}$ -- are a LRU cycle.

Consider an $n\times n$ submatrix $\hat{R}$ of $R$ with minimal $n\geq 3$, no identical columns, and such that each row and column of $\hat{R}$ sum to two. Such a submatrix $\hat{R}$ exists, because $R$ satisfies the same conditions. We will rename the rows and columns of $\hat{R}$ such that we can easily show that the columns (LRUs) of $\hat{R}$ are a LRU cycle. 
This renaming procedure is as follows.

The first row is $v_1$ and $Q_1$ and $Q_2$ are the columns such that $\hat{r}_{v_1,Q_1}=\hat{r}_{v_1,Q_2}=1$. This follows without loss of generality, because $\hat{R}$ is binary and the sum of each row equals two. Furthermore, note that all other values of $v_1$ are zero. Next, let $v_2$ be the second row such that $\hat{r}_{v_2,Q_2}=1$. This is feasible because the sum of column $Q_2$ is two. Moreover, all other rows (except $v_1$ and $v_2$) have the value 0 in column $Q_2$. We also remark that $\hat{r}_{v_2,Q_1}=0$, since otherwise all other values in column $Q_1$ (except for $\hat{r}_{v_1,Q_1}$ and $\hat{r}_{v_2,Q_1}$) are zero and this means that columns $Q_1$ and $Q_2$ are identical, which is a contradiction. 

Next, we label the column $Q_i$ such that $\hat{r}_{v_{i-1},Q_i}=1$ for each $i=3,\ldots,n$, and we call the row $v_i$ that satisfies $\hat{r}_{v_i,Q_i}=1$, for all $i=3,\ldots,n$. This can be done due to the following reasoning. The columns $Q_j$ with $1<j<i-1$ are such that $\hat{r}_{v_{i-1},Q_j}=0$, because each column $Q_j$ already sums to two. Unless $i=n$, we have $\hat{r}_{v_{i-1},Q_1}=0$ because otherwise we would have a $i\times i$ submatrix for which each row and column sum equal 2, $i\geq 3$, and where no identical columns exists. But this would contradict the fact that $n$ is minimal. Hence, we can label $Q_i$ such that $\hat{r}_{v_{i-1},Q_i}=1$. 
Moreover, all rows $v_j$ with $1\leq j<i-1$ are such that $\hat{r}_{v_j,Q_i}=0$, because each row $v_j$ already sums to two (by considering columns $Q_k$ with $k< i$). Therefore, we can call a row $v_i$ such that $\hat{r}_{v_i,Q_i}=1$. 


Finally, we let $r_{v_n,Q_1}=1$ such that the row and column sum of \emph{each} row and column of $\hat{R}$ equals 2.

Given the renaming of the columns and rows of $\hat{R}$, we have for all $1\leq i\leq n$ that $\{v_i\}=Q_i\cap Q_{i+1}$ with $n+1 \equiv 1 \pmod{n}$. Furthermore, this implies that $(Q_i\cap Q_{i+1})\setminus (Q_{i+1}\cap Q_{i+2})=\{v_i\} \setminus \{v_{i+1}\}\neq\emptyset$ and $(Q_{i+1}\cap Q_{i+2})\setminus (Q_i\cap Q_{i+1})=\{v_{i+1}\} \setminus \{v_{i}\}\neq\emptyset$ with $n+1 \equiv 1 \pmod{n}$ and $n+2 \equiv 2 \pmod{n}$. This implies that the LRUs $Q_1,Q_2,\ldots, Q_n$ are a LRU cycle. 

Hence, we proved that the submatrix $\hat{R}$ corresponds to a LRU cycle. Hence, if an optimal solution $x^*$ does not contain a LRU cycle, it will not contain such a submatrix and, thus, the matrix $\mathcal{Z}^*$ is totally balanced. 
\endproof

Given Theorem \ref{thm:intersection_cycle} and Theorem \ref{thm:totally_balanced}, our final and rather unusual result follows relatively easily in Theorem \ref{thm:integrality}.

\begin{theorem}
All extreme optimal points of the feasible polyhedron of LPM are integer.
\label{thm:integrality}
\end{theorem}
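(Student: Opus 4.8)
The plan is to chain Theorems \ref{thm:intersection_cycle} and \ref{thm:totally_balanced} with the classical fact that a balanced (and a fortiori a totally balanced) $0/1$ matrix induces an integral set partitioning polytope. Concretely, let $x^*$ be an arbitrary extreme optimal point of the feasible polyhedron of LPM. Since $x^*$ is optimal, Theorem \ref{thm:intersection_cycle} shows it contains no LRU cycle, and then Theorem \ref{thm:totally_balanced} yields that the support matrix $\mathcal{Z}^*=\{Z_Q : x^*_Q>0\}$ is totally balanced. Every totally balanced matrix is balanced: a $2\times 2$ square submatrix all of whose row and column sums equal two has two identical columns, while a square $0/1$ submatrix of order at least $3$ with all row and column sums equal to two and no repeated column is the vertex--edge incidence matrix of a disjoint union of cycles of length $\ge 3$; so forbidding all such submatrices in particular forbids those corresponding to odd cycles. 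Hence $\mathcal{Z}^*$ is balanced.

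Next I would pass to the face of LPM obtained by fixing to zero every variable outside $\mathrm{supp}(x^*)$, i.e. to the polytope $P^*=\{\,y\ge 0 : \mathcal{Z}^*y=\mathbf{1}\,\}$. Note $x^*$ restricted to $\mathrm{supp}(x^*)$ lies in $P^*$, and the upper bounds $y\le\mathbf{1}$ inherited from LPM are redundant on $P^*$: each column of $\mathcal{Z}^*$ is a nonempty LRU, so every row equation of $\mathcal{Z}^*y=\mathbf{1}$ together with $y\ge 0$ already forces the coordinates occurring in it to lie in $[0,1]$. Now extending any point of $P^*$ by zeros produces a feasible point of LPM; therefore, if $x^*$ restricted to $\mathrm{supp}(x^*)$ were a proper convex combination of two distinct points of $P^*$, then $x^*$ itself would be a proper convex combination of two distinct feasible points of LPM, contradicting that it is an extreme point. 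Thus $x^*$ restricted to $\mathrm{supp}(x^*)$ is a \emph{vertex} of $P^*$.

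Finally I would invoke the theorem of Fulkerson, Hoffman and Oppenheim on balanced matrices (see, e.g., \citet{anstee1984characterizations} and \citet{hoffman1985totally}): if $A$ is a balanced $0/1$ matrix, then every vertex of $\{\,y\ge 0 : Ay=\mathbf{1}\,\}$ is integral. Applied with $A=\mathcal{Z}^*$, this makes $x^*$ restricted to $\mathrm{supp}(x^*)$ an integral vector; since $\mathcal{Z}^*$ is $0/1$ and the pertinent row sums equal one, these entries in fact lie in $\{0,1\}$, and padding with zeros gives $x^*\in\{0,1\}^{\mathscr{S}}$, which is the claim. The only genuine obstacle beyond bookkeeping is the middle paragraph — carefully verifying that restricting an extreme point of LPM to its own support yields a vertex of the smaller polytope $P^*$, which is precisely what licenses the balanced-matrix integrality theorem — together with ensuring that the integrality result invoked is the one for the equality (``exact cover'') system rather than for the covering or packing relaxations. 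Everything else is a direct consequence of the two preceding theorems, which is why this final result follows relatively easily once they are in hand.
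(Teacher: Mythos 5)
Your proposal is correct and follows essentially the same route as the paper: combine Theorem \ref{thm:intersection_cycle} and Theorem \ref{thm:totally_balanced} to conclude that the support matrix $\mathcal{Z}^*$ is totally balanced, and then invoke the Fulkerson--Hoffman--Oppenheim integrality result for balanced matrices applied to the polytope $\{y\ge 0:\mathcal{Z}^*y=\mathbf{1}\}$. Your middle paragraph merely makes explicit a step the paper leaves implicit (that an extreme optimal point of LPM restricts to a vertex of the support polytope, and that the bounds $y\le\mathbf{1}$ are redundant there), which is a welcome but not substantively different addition.
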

\proof{Proof.}
Let $x^*$ be an optimal solution to LPM. Each LRU $Q\in S^*$ is a connected subgraph of $G$ by Lemma \ref{lem:connected_LRU} and $x^*$ does not contain a LRU cycle by Theorem \ref{thm:intersection_cycle}. Then, the matrix $\mathcal{Z}^*$ is totally balanced by Theorem \ref{thm:totally_balanced}. Consequently, the polyhedron $\mathcal{P}=\left\{x: \mathcal{Z}^*x=1,x\geq 0,x\in\mathbb{R}^{|S^*|}\right\}$ is integral \citep{fulkerson1974balanced}. Hence, $x^*$ is either integer or a convex combination of integer solutions to LPM, and thus we obtain our desired result. 
\endproof

In a column generation approach, a restricted version of LPM is solved repeatedly after new columns are added (this will be detailed in Section \ref{sec:solving_LPM}). 
If the restricted versions of LPM are solved by the simplex algorithm, we obtain an optimal solution $x^*$ that is an extreme point of the polyhedron $P=\left\{x : Zx=1,x\geq 0,x\in\mathbb{R}^{|\mathscr{S}|}\right\}$, but is also an extreme point of the polyhedron $\mathcal{P}=\left\{x: \mathcal{Z}^* x=1,x\geq 0,x\in \mathbb{R}^{|S^*|}\right\}$ spanned by the submatrix $\mathcal{Z}^*$. Theorem \ref{thm:integrality} now implies that $x^*$ is integral, because $x^*$ is an extreme point of $\mathcal{P}$. Hence, solving LPM with the simplex algorithm yields an optimal integer solution, and this solution is thus also optimal for M.

If LPM is solved by the simplex algorithm, we obtain an optimal solution $x^*$ that is an extreme point of the polyhedron $P=\left\{x : Zx=1,x\geq 0,x\in\mathbb{R}^{|\mathscr{S}|}\right\}$, but is also an extreme point of the polyhedron $\mathcal{P}=\left\{x: \mathcal{Z}^* x=1,x\geq 0,x\in \mathbb{R}^{|S^*|}\right\}$ spanned by the submatrix $\mathcal{Z}^*$. Theorem \ref{thm:integrality} now implies that $x^*$ is integral, because $x^*$ is an extreme point of $\mathcal{P}$. Hence, solving LPM with the simplex algorithm yields an optimal integer solution, and this solution is thus also optimal for M.

We would like to stress that our result in Theorem \ref{thm:integrality} is rather unusual, because the polyhedron $P$ of LPM is \emph{not} integral. This contrasts with much other research that focuses on proving integrality of the polyhedron to conclude that an optimal integer solution can be found (if the objective function is convex), e.g. via totally unimodular constraint matrices. 
We demonstrate that -- for non--integral polyhedra -- analysis of the objective function can be used to establish the existence of an optimal integer solution to a relaxed problem when the constraint matrix will not guarantee an integral polyhedron. The crucial analysis of the objective function is in the proof of Theorem \ref{thm:intersection_cycle}.
We believe that our proof approach can be used more generally for partitioning problems where the objective function is convex. One is only required to show that cycles in solutions are suboptimal. 

%
%
%
%
%
%

\subsection{Solving LPM and M}
\label{sec:solving_LPM}
Given Theorem \ref{thm:integrality}, we move our attention to solving LPM, for which we apply column generation. Hence, we consider a feasible subset of LRUs (or columns) $\tilde{\mathscr{S}}\subseteq\mathscr{S}$ for LPM. This results in the \textit{Restricted Master Program} (RLPM). 
%
%
%
%
%
%
%
%
%
%
For RLPM, we generate profitable LRUs (columns) by solving the pricing problem of RLPM:
\begin{equation}
c^*=\min_{Q\in \mathscr{S}}\left\{\omega(Q)-\sum_{v\in Q}r_v\right\},
\label{eq:pricing_compact}
\end{equation}
where $r_v$ are dual variables for the partitioning constraints of RLPM. 
We want to find a LRU $Q\in\mathscr{S}$ with minimal reduced cost. 
After we solve the pricing problem \eqref{eq:pricing_lin}, we add the obtained LRU to $\tilde{\mathscr{S}}$ and we again solve LPM with the new $\tilde{\mathscr{S}}$. Next, we solve the pricing problem again, and we repeat this procedure until the pricing problem does not return a profitable LRU (column), i.e., we terminate when $c^*\geq 0$. This means that there does not exist a LRU (column) that is worthwhile to add to our LPM, and we have obtained the optimal solution. 

The computation of the pricing problem is difficult in general, but can be done with a standard solver when the failure rate function $\lambda$ is additive, i.e., when $\lambda(S)=\sum_{i\in S}\lambda(\{i\})$. We provide this formulation below.

\subsubsection{Pricing problem for additive failure rates}
\label{sec:pricingaddtive}

Problem \eqref{eq:pricing_compact} can be expressed as follows when $\lambda$ is additive ($\lambda(S)=\sum_{i\in S}\lambda(\{i\})$). The binary auxiliary variable $k^e$ denotes whether edge $e\in E$ needs to be disconnected to remove LRU $Q$. The binary decision variable $\gamma_v$ indicates whether node $v$ is included in the LRU $Q$. Now we rewrite Problem \eqref{eq:pricing_compact} to obtain
\begin{IEEEeqnarray}{lllllll}
\interdisplaylinepenalty=0
\IEEEyesnumber\label{eq:pricing_nonlin} \IEEEyessubnumber*
\mbox{$c^*$=}	\qquad		&& \underset{\boldsymbol{\gamma},\boldsymbol{k}}{\min}	&\quad	& \sum_{e\in E}k^e w(e) \sum_{v\in V}\gamma_{v}\lambda(\{v\})+\sum_{u\in V}&&\gamma_{u}\ell(u)\sum_{v\in V}\gamma_{v}\lambda(\{v\})-\sum_{v\in V}\gamma_v r_v \label{eq:pricing_nonlin_obj}\IEEEeqnarraynumspace\\
							&& \mbox{s.t.}		&		& \gamma_{u}-\gamma_{v}\leq k^e,	& &\forall \{u,v\}\in E, \forall e\in H(\{u,v\}), \\
							&&					&		& \gamma_{v},k^e \in \{0,1\}.										&&
\end{IEEEeqnarray}
Problem \eqref{eq:pricing_compact} can be linearized by applying the McCormick linearization method \citep{mccormick1976computability}. In particular, let
 $\eta_{ev}=k^e\gamma_v$ and $\delta_{uv}=\gamma_u\gamma_v$. Note that $\eta_{ev}$ denotes whether the LRU contains part $v\in V$ and that edge $e\in E$ needs to be broken for the LRU to be removed. Similarly, $\delta_{uv}$ represents whether the LRU contains both parts $u,v\in V$. Then a binary linear formulation of \eqref{eq:pricing_compact} is given below in\eqref{eq:pricing_lin}. 

\begin{IEEEeqnarray}{lllllll}
\interdisplaylinepenalty=0
\IEEEyesnumber\label{eq:pricing_lin} \IEEEyessubnumber*
\mbox{$c^*$=}	\qquad		&& \underset{\boldsymbol{k},\boldsymbol{\gamma},\boldsymbol{\eta},\boldsymbol{\delta}}{\min}	&\quad	& \sum_{e\in E} \sum_{v\in V}\eta_{ev}\lambda(v)w(e)+\sum_{u,v\in V}&&\delta_{uv}\ell(u)\lambda(v)-\sum_{v\in V}\gamma_v r_v\\
							&& \mbox{s.t.}		&		& \gamma_{u}-\gamma_{v}\leq k^e,								&&\forall \{u,v\}\in E, \forall e\in H(\{u,v\}),\IEEEeqnarraynumspace\\
							&&					&		& \eta_{ev}\leq k^e,											&&\forall e\in E, \forall v\in V,\\
							&&					&		& \eta_{ev}\leq \gamma_v,										&&\forall e\in E, \forall v\in V,\\
							&&					&		& \eta_{ev}\geq k^e+\gamma_v-1 ,								&&\forall e\in E, \forall v\in V,\\
							&&					&		& \delta_{uv}\leq \gamma_u,										&&\forall u,v\in V,\\
							&&					&		& \delta_{uv}\leq \gamma_v,										&&\forall u,v\in V,\\
							&&					&		& \delta_{uv}\geq \gamma_u+\gamma_v-1,							&&\forall u,v\in V,\\
							&&					&		& \eta_{ev},\delta_{uv}\geq 0,\gamma_{v},k^e \in \{0,1\}.			&&
\end{IEEEeqnarray}

After we solve the pricing problem \eqref{eq:pricing_lin}, we add the obtained LRU to $\tilde{\mathscr{S}}$ and we again solve LPM with the new $\tilde{\mathscr{S}}$. Next, we solve the pricing problem again, and we repeat this procedure until the pricing problem does not return a profitable LRU (column), i.e., we terminate when $c^*\geq 0$. This means that there does not exist a LRU (column) that is worthwhile to add to our LPM, and we have obtained the optimal solution.

\section{Binary programming formulation an additive failure rate function}
\label{sec:bp}

In this section, we consider only additive failure rates $\lambda$, i.e. $\lambda(Q)=\sum_{v\in Q}\lambda(\{v\})$. This special case is convenient when implementing the model and solving it with a standard solver like CPLEX or Gurobi. We formulate \textsc{LRU Design} as a binary non--linear program (BNLP) and linearize it so that it can be solved by a standard integer program solver. We use this linearlized binary linear Program (BLP) as a benchmark to compare our set partitioning formulation with. 

For the BNLP, we first relax the fact that $\emptyset\not\in S$. In the foregoing, $S$ was a partition of $V$. For the BNLP (and later the BLP) we consider a solution $S^{\prime}$ satisfying $|S^{\prime}|=|V|$, where $S^{\prime}$ may contain empty LRUs, and we have that $S=\{Q\in S^{\prime}: Q\neq \emptyset\}$. Next, we index each LRU in $S^{\prime}$ by $i\in\{1,\ldots,|V|\}$, i.e., we have LRUs $Q_i\in S^{\prime}$ that are indexed by $i$. Furthermore, we create a binary variable $y_{vi}$ that indicates whether a part $v\in V$ is assigned to LRU $Q_i,\;i\in\{1,\ldots,|V|\}$: 
$$y_{vi}=\begin{cases} 1 &\quad \text{if $v\in Q_i$} \\ 0 &\quad \text{otherwise}\end{cases},\quad \forall v\in V, \forall i\in \{1,\ldots,|V|\}.$$
We denote $\boldsymbol{Y}$ as the matrix consisting of all entries $y_{vi}$. Note that we can derive $S^{\prime}$ easily from $\boldsymbol{Y}$. 
We also define the auxiliary binary variable $k_i^e$ that denotes whether edge $e\in E$ needs to be broken in order to replace LRU $Q_i$, and organize them in a matrix $\boldsymbol{K}$. We determine the value of $k^e_i$ by considering all the edges $b\in B(Q_i)$. We determine $B(Q_i)$ by considering the edges $\{u,v\}\in E$ such that $y_{ui}-y_{vi}=1$ or $y_{vi}-y_{ui}=1$, i.e., one of the end points of $\{u,v\}$ belongs to LRU $Q_i$ while the other end point does not. This corresponds to the definition of $B(Q_i)$. Subsequently, we consider each edge $e\in E$ that needs to be broken before breaking $\{u,v\}$; i.e., for each $\{u,v\}\in B(Q_i)$ we consider all $e\in H(\{u,v\})$. Hence, the variable $k_i^e$ satisfies $y_{ui}-y_{vi}\leq k_i^e,\; \forall \{u,v\}\in E, \forall e\in H(\{u,v\}),\forall i\in \{1,\ldots,|V|\}$. 
Note that $k_i^e$ may take the value of one, even if an edge $e\in E$ is fully contained within a LRU $Q_i$.
We denote $\boldsymbol{K}$ as the matrix consisting of all entries $k_i^e$.
We use the variable $k_i^e$ in our objective function \eqref{eq:bnlp_obj}, since it represents whether an edge has to be broken ($k^e_i=1$) in order to remove LRU $Q_i$. Furthermore, the objective function implies $k^e_i=0$ if edge $e$ is not broken for the replacement of $Q_i$. 
Next, we use $k_i^e$, the edge weights, and the failure rate of $Q_i$ (expressed using $y_{vi}$) to determine the cost for replacing $Q_i$. The total purchase cost of the LRU $Q_i$ is derived by using $y_{vi}$, and we multiply this by the total failure rate of $Q_i$ (which also depends on $y_{vi}$).
This results in a binary non--linear programming formulation of \textsc{LRU Design}:
\begin{IEEEeqnarray}{lllllll}
\interdisplaylinepenalty=0
\IEEEyesnumber \label{eq:bnlp} \IEEEyessubnumber*
\mbox{(BNLP)}\qquad		&& \underset{\boldsymbol{Y},\boldsymbol{K}}{\min}	&\;	& \sum_{i=1}^{|V|} \sum_{e\in E}k_i^e w(e)\sum_{v\in V}y_{vi}\lambda(v)+&&\sum_{i=1}^{|V|}\sum_{u\in V}y_{ui}\ell(u)\sum_{v\in V}y_{vi}\lambda(v)\label{eq:bnlp_obj}\\
			&& \mbox{s.t.}		&		& \sum_{i=1}^{|V|} y_{vi}= 1,		&&\forall v\in V, \label{eq:bnlp_con_part}\\
			&& 					&		& y_{ui}-y_{vi}\leq k_i^e,			&&\forall \{u,v\} \in E,\forall e\in H(\{u,v\}),\forall i\in \{1,\ldots,|V|\}, \label{eq:bnlp_con_edgeCount}\\
			&&					&		& y_{vi},k_i^e \in \{0,1\}.			&&
\end{IEEEeqnarray}

Constraints \eqref{eq:bnlp_con_part} ensure that each part $v\in V$ is included in exactly one LRU, and constraints \eqref{eq:bnlp_con_edgeCount} enforce the definition of the auxiliary variable $k_i^e$.
%

The BNLP is a problem with a quadratic objective function. Therefore, the BNLP van be linearized by the McCormick reformulation \citep{mccormick1976computability}; i.e., we introduce new variables $\rho_{ev}^i=y_{vi}k_i^e$ and $\sigma_{uv}^i=y_{ui}y_{vi}$. The variable $\rho_{ev}^i$ denotes whether LRU $Q_i$ contains part $v\in V$ and whether edge $e\in E$ needs to be broken in order to replace the LRU $Q_i$. Analogously, $\sigma_{uv}^i$ denotes whether two parts $u,v\in V$ are both contained in the same LRU $Q_i$. Substituting $\rho_{ev}^{i}$ and $\sigma_{uv}^i$ into the above implies that we need to add constraints that enforce the interpretation we gave. Furthermore, we optimize over $\boldsymbol{Y}$, $\boldsymbol{K}$, $\boldsymbol{\rho}$ and $\boldsymbol{\sigma}$, where $\boldsymbol{\rho}$ and $\boldsymbol{\sigma}$ correspond to the 3--D arrays with entries $\rho_{ev}^{i}$ and $\sigma_{uv}^i$, respectively. Hence, we obtain the BLP formulation of \textsc{LRU Design}:
\begin{IEEEeqnarray}{lllllll}
\interdisplaylinepenalty=0
\IEEEyesnumber \label{eq:blp} \IEEEyessubnumber*
\mbox{(BLP)}\qquad			&& \underset{\boldsymbol{Y},\boldsymbol{K},\boldsymbol{\rho},\boldsymbol{\sigma}}{\min}	&\;	& \sum_{i=1}^{|V|} \sum_{e\in E}\sum_{v\in V}\rho_{ev}^i \lambda(v)w(e)+&&\sum_{i=1}^{|V|}\sum_{u,v\in V}\sigma_{uv}^i\ell(u)\lambda(v)\\
							&& \mbox{s.t.}		&		& \sum_{i=1}^{|V|} y_{vi}= 1,						&&\forall v\in V, \\
							&& 					&		& y_{ui}-y_{vi}\leq k_i^e,				&&\forall \{u,v\} \in E, \forall e\in H(\{u,v\}), \forall i\in\{1,\ldots,|V|\},\\
							&&					&		& \rho_{ev}^{i}\leq y_{vi},						&&\forall e\in E, \forall v\in V, \forall i\in\{1,\ldots,|V|\}, \label{eq:blp_subst_rho}\\
							&&					&		& \rho_{ev}^{i}\leq k_i^e,						&&\forall e\in E, \forall v\in V, \forall i\in\{1,\ldots,|V|\},\\
							&&					&		& \rho_{ev}^{i}\geq y_{vi}+k_i^e-1,				&&\forall e\in E, \forall v\in V,\forall i\in\{1,\ldots,|V|\},\\
							&&					&		& \sigma_{uv}^i\leq y_{ui},						&&\forall u,v\in V,\forall i\in\{1,\ldots,|V|\}, \IEEEeqnarraynumspace \\
							&&					&		& \sigma_{uv}^i\leq y_{vi},						&&\forall u,v\in V,\forall i\in\{1,\ldots,|V|\},\\
							&&					&		& \sigma_{uv}^i\geq y_{ui}+y_{vi}-1,			&&\forall u,v\in V,\forall i\in\{1,\ldots,|V|\},\\
							&&					&		& \rho_{ev}^i,\sigma_{uv}^i\geq 0,				&&\label{eq:blp_subst_sigma}\\
							&&					&		& y_{vi},k_i^e \in \{0,1\}.						&&
\end{IEEEeqnarray}

We acknowledge that tighter MIP formulations than BLP will exist due to extensive studies on binary quadratic programming and boolean quadratic polytopes \citep{padberg1989,boros1992,boros1993,deza1997,rendl2010,bonami2018,junger2021,charfreitag2022,rehfeldt2023}. The formulation above is a straightforward formulation that will appeal to many practitioners of operations research due to its simplicity.

\section{Numerical experiments}
\label{sec:numerical_experiments}
In this section, we use the binary linear program (BLP) formulation and the set partitioning formulation (LPM) of \textsc{LRU Design} under additive failure rate functions to gain some insight into the size of instances that can be solved to optimality.
We shed a light on the objective value gap between both approaches, and we explore the effects of parameter perturbations on our model's outcomes. Furthermore, we numerically illustrate -- based on randomly generated instances of practical size -- that the relative cost increase from introducing a partitioning constraint is very limited, and its magnitude decreases as the number of parts grows. We have implemented all optimization model formulations in JuMP \citep{lubin2015computingjulia,dunning2017jump}, which is a mathematical optimization package of Julia \citep{balbaert2016julia}, and we solved all problems using Gurobi 7.0.1 on an Intel i5--4300U @2.50GHz processor with 16GB RAM and running Ubuntu 16.04 LTS.

In Section \ref{sec:num_exp_instance_generator}, we explain the used instance generator for our experiments. In Section \ref{sec:num_exp_results} we study the difference between the computation times (in seconds) of the binary linear programming formulation (BLP) and the set partitioning formulation (LPM). Also, we discuss the relative difference in objective value between both models. Furthermore, we shed some light on how the downtime cost per time unit affects the number of LRUs used in an optimal solution. Moreover, we study how the system's complexity affects the total annual costs by considering the number of connections between parts, and the number of predecessor--successor relationships that exist. Finally, we show that the cost effect of introducing a partitioning constraint is very limited for practically sized instances. 

\subsection{Instance generator}
\label{sec:num_exp_instance_generator}
An instance is described by the graphs $G$ and $D$. We vary the number of vertices $|V|$, the number of edges $|E|$, and the number of arcs $|A|$ in our numerical experiments. We relate the number of edges in $G$ to the number of vertices by $|E|=\delta|V|$, where $\delta$ is the average vertex degree in the graph $G$. Similarly, we relate the number of arcs in $D$ to the number of edges by $|A|=\delta_E |E|$, where $\delta_E$ is the average out degree of an edge $e\in E$. All other parameters such as $\lambda(v)>0$ and $\ell(v)>0$ for all $v\in V$, and $w(e)>0$ for all $e\in E$ are randomly generated, as well as a graph's layout in terms of the edge set $E$ and the arc set $A$. 

The graphs $G$ and $D$ are generated in the following way. For $G$, we have a set of vertices $V$ and a number of unique edges $|E|=\delta|V|$, and we create a spanning tree with $|V|-1$ edges. We add an arbitrary vertex $v\in V$ to a set of considered vertices $\tilde{V}$, and we select a new vertex $u\in V\setminus \tilde{V}$ and connect it to an arbitrary vertex $z\in \tilde{V}$ by adding the edge $\{u,v\}$ to the edge set $E$. We keep doing this until $\tilde{V}=V$. Subsequently we add remaining edges randomly to our graph and we terminate once we have $|E|$ edges in $G$. 
%
%
%
%
Secondly, we generate the precedence graph $D$. We (randomly) assign an index to each edge $e\in E$ and denote this index by ${I}(e)$, and the minimum and maximum values assigned are $1$ and $|E|$, respectively. We start with $A=\emptyset$ and add an arc in each iteration. An iteration starts by selecting two random edges $\{u,v\},\{v,x\}\in E:u,v,x\in V$ and $u\neq v \neq x$. If ${I}(\{u,v\})\leq {I}(\{v,x\})$ we create an arc $(\{u,v\},\{v,x\})$ and add it to $A$, otherwise we create an arc $(\{v,x\},\{u,v\})$ and add it to $A$. We repeat this procedure until $\delta_E|E|=|A|$, and upon termination we have obtained a set $A$ that has a topological sorting and thus the precedence graph $D$ is acyclic.

\subsection{Computational results}
\label{sec:num_exp_results}

Next, we discuss the computational results for our model. 
The generation of a random graph follows the procedure from Section \ref{sec:num_exp_instance_generator}, and we let $|V|\in\{10,20,30,40,50,60\}$, $\delta\in\{2,3,4\}$, and $\delta_E\in \{0.5,1,1.5\}$. For each combination ($|V|,\delta,\delta_E)$, we generate 10 random instances, resulting in a total of 540 instances.

We use a time limit of 600 seconds for the BLP formulation and also for the set partitioning formulation. This time limit is relatively low because we solve a large number of instances, thereby making it feasible to perform the entire numerical study in a reasonable amount of time. If an instance has not been solved to optimality within 600 seconds, we say that it is inefficient. If all instances of a certain parameter combination $(|V|,\delta,\delta_E)$ are inefficient, we write -- as an entry for the combination. We determine the average computation time of both formulations based on the efficient instances. The results are presented in Table \ref{tab:comp_times_partitioning}, where the computation times are given in seconds, and the subscripts indicate the number of efficient instances. Furthermore, we have not reported computation times for the BLP with $|V|\geq 40$, since we have found no efficient solutions within the time limit. 
\begin{table}[htbp!]
\centering
\begin{tabular}{crrrr}
\hline

BLP					&							&	\multicolumn{3}{c}{$|V|$} \bigstrut[t]\\
$\delta$	&	$\delta_E$	&	10	&	20	&	30\\
\cline{3-5}
2     & 0.5   & $4.45_{10}$  & $121.02_{10}$ & --      \bigstrut[t]\\
2     & 1     & $5.16_{10}$  & $144.59_{10}$ & --      \\
2     & 1.5   & $3.39_{10}$  & $85.38_{10}$  & $577.21_{3}$     \\
3     & 0.5   & $7.18_{10}$  & $316.42_{9}$  & --  \\
3     & 1     & $6.22_{10}$  & $308.73_{10}$ & --      \\
3     & 1.5   & $4.98_{10}$  & $151.64_{10}$ & --      \\
4     & 0.5   & $11.00_{10}$ & $479.45_{6}$  & --      \\
4     & 1     & $8.56_{10}$  & $502.25_{4}$  & -- 	  \\
4     & 1.5   & $6.62_{10}$  & $246.39_{9}$  & --       \\
\hline
\end{tabular}

\bigskip

\begin{tabular}{crrrrrrr}
\hline
LPM					&							&	\multicolumn{6}{c}{$|V|$} \bigstrut[t]\\
$\delta$	&	$\delta_E$	&	10	&	20	&	30	&	40	&	50	&	60\\
\cline{3-8}
2     & 0.5   & $0.21_{10}$  & $1.64_{10}$  & $13.02_{10}$ & $37.32_{10}$  & $111.75_{10}$ & $216.53_{10}$ \bigstrut[t]\\
2     & 1     & $0.61_{10}$  & $3.79_{10}$  & $30.97_{10}$ & $88.65_{10}$  & $314.11_{9}$  & $533.70_{2}$ \\
2     & 1.5   & $0.19_{10}$  & $1.65_{10}$  & $9.66_{10}$  & $36.46_{10}$  & $124.71_{10}$ & $228.21_{9}$ \\
3     & 0.5   & $0.37_{10}$  & $2.59_{10}$  & $22.13_{10}$ & $60.02_{10}$  & $105.39_{10}$ & $331.38_{7}$ \\
3     & 1     & $0.97_{10}$  & $8.08_{10}$  & $37.98_{10}$ & $232.89_{10}$ & $331.50_{5}$  & -- \\
3     & 1.5   & $0.65_{10}$  & $3.68_{10}$  & $17.76_{10}$ & $93.86_{10}$  & $289.00_{10}$ & $514.14_{3}$ \\
4     & 0.5   & $0.50_{10}$  & $3.39_{10}$  & $26.50_{10}$ & $114.09_{10}$ & $304.42_{9}$  & $410.97_{3}$ \\
4     & 1     & $1.37_{10}$  & $10.06_{10}$ & $48.56_{10}$ & $294.81_{10}$ & $432.31_{2}$  & -- \\
4     & 1.5   & $1.29_{10}$  & $6.25_{10}$  & $32.54_{10}$ & $211.55_{10}$ & $521.40_{3}$  & -- \\
\hline
\end{tabular}
\caption{Average computation times (sec) of both formulations. The subscripts indicate that number of instances that were solved to optimality for a given setting.}
\label{tab:comp_times_partitioning}
\end{table}

We observe that, given the time limit of 600 seconds, the BLP formulation can only solve small size instances up to 20 vertices (parts), while the set partitioning (SP) formulation can solve medium size to large instances up to 60 vertices (parts).  
Furthermore, we see that the set partitioning formulation solves instances faster than the BLP formulation. This effect is amplified when the instances become larger, i.e., when $|V|$ and $\delta$ increase. 
Real--life instances are typically medium to large sized instances and can have 50 vertices (parts). Furthermore, such instances may possess many and complex connections and predecessor--successor relationships. This makes the BLP formulation unsuitable for practical purposes. Hence, it is worthwhile to invest extra time to implement the set partitioning formulation (LPM) with a pure pricing algorithm. Furthermore, the computation times illustrate that the set partitioning formulation of \textsc{LRU Design} is particularly useful as a feedback mechanism for the company's design department. The engineers can quickly assess many design alternatives (in terms of the connection graph and precedence graph) and their effects on the optimal LRU design and the corresponding (after--sales) costs.

Not solving the BLP formulation to optimality may still result in solutions that are near optimal. Therefore, we also perform a gap analysis by studying the relative difference in the objective values of both formulations. We only study this difference for parameter combinations for which each instance solved by LPM is efficient, i.e., instances with $|V|\leq 40$. We define the objective value gap $$\beta=\frac{\pi(S_b)-\pi(S_{l})}{\pi(S_l)}\times 100\%,$$ with $\pi(S_b)$ and $\pi(S_l)$ denoting the costs of the best feasible solution found after 600 seconds of the BLP and set partitioning formulation (LPM), respectively. The results are shown in Table \ref{tab:objectivevalue_gap_partitioning}.

\begin{table}[htbp!]
\centering
\begin{tabular}{crrrrr}
\hline
					&							&	\multicolumn{4}{c}{$|V|$} \bigstrut[t]\\
$\delta$	&	$\delta_E$	&	10	&	20	&	30	&	40\\
\cline{3-6}
2     & 0.5   & $0.0\%$  & $0.0\%$  & $18.9\%$ & $257.6\%$  \bigstrut[t]\\
2     & 1     & $0.0\%$  & $0.0\%$  & $20.3\%$ & $349.1\%$ \\
2     & 1.5   & $0.0\%$  & $0.0\%$  & $29.2\%$  & $369.0\%$ \\
3     & 0.5   & $0.0\%$  & $0.0\%$  & $96.7\%$ & $523.1\%$ \\
3     & 1     & $0.0\%$  & $0.1\%$  & $115.9\%$ & $520.6\%$ \\
3     & 1.5   & $0.0\%$  & $0.0\%$  & $110.3\%$ & $474.2\%$ \\
4     & 0.5   & $0.0\%$  & $0.0\%$  & $150.1\%$ & $580.2\%$ \\
4     & 1     & $0.0\%$  & $7.8\%$  & $127.5\%$ & $218.9\%$ \\
4     & 1.5   & $0.0\%$  & $4.6\%$  & $95.5\%$ & $191.1\%$ \\
\hline
\end{tabular}
\caption{Objective value gap between both formulations.}
\label{tab:objectivevalue_gap_partitioning}
\end{table}

We observe that the relative objective value difference rapidly grows as the number of vertices grows. This implies that the inefficient instances, produced by the BLP, do not result in a competitive solution compared to the set partitioning formulation (LPM).

Next, we numerically study the effect of the cost of one time unit of system downtime on the number of LRUs that is used in an optimal LRU design. In the remainder of this section, we use the same instance generator as discussed in Section \ref{sec:num_exp_instance_generator}, and we generate 1,000 instances per parameter setting $(\delta,\delta_E)$ and keep $|V|=20$. 
%
%
%
For a given instance, we vary the edge weights by multiplying all edge weights of the instance by a constant factor $q\in\{0.1, 1, 10\}$. A higher value for $q$ means that it is more expensive to break edges. If the time for breaking an edge remains constant, it means that the cost rate per time unit for breaking an edge increases, and thus we can capture a higher downtime cost per time unit by varying $q$. This way, we create three classes of instances (i) low downtime cost per time unit ($q=0.1$); (ii) moderate downtime cost per time unit ($q=1$); (iii) and high downtime cost per time unit ($q=10$). We keep the parameter values for $\delta$ and $\delta_E$ constant at $\delta=3$ and $\delta_E=1$. We focus on the number of LRUs $|S^*|$ in an optimal solution $S^*$. The results are presented in Figure \ref{fig:edge_weights}.
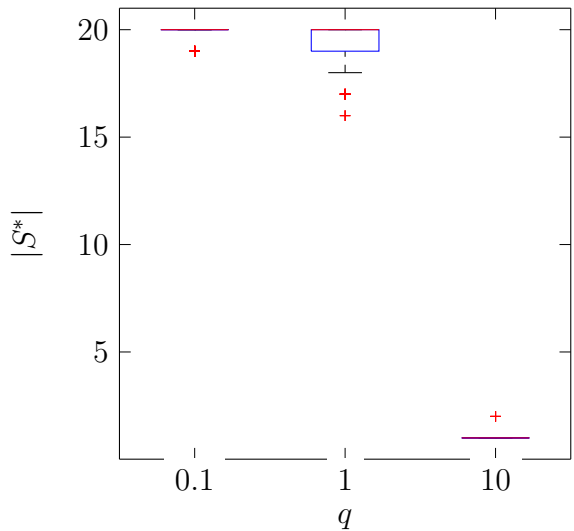
\begin{figure}[htbp!]
	\centering
	\setlength\figureheight{6cm} 
	\setlength\figurewidth{6cm}
%
%

\begin{tikzpicture}

\begin{axis}[%
width=\figurewidth,
height=\figureheight,
at={(0\figurewidth,0\figureheight)},
scale only axis,
clip=false,
separate axis lines,
every outer x axis line/.append style={black},
every x tick label/.append style={font=\color{black}},
every x tick/.append style={black},
xmin=0.5,
xmax=3.5,
xtick={1,2,3},
xlabel={$q$},
every outer y axis line/.append style={black},
every y tick label/.append style={font=\color{black}},
every y tick/.append style={black},
ymin=0.01,
ymax=21,
ylabel={$|S^*|$},
axis background/.style={fill=white}
]
\addplot [color=black, dashed, forget plot]
  table[row sep=crcr]{%
1.000	20.000\\
1.000	20.000\\
};
\addplot [color=black, dashed, forget plot]
  table[row sep=crcr]{%
2.000	20.000\\
2.000	20.000\\
};
\addplot [color=black, dashed, forget plot]
  table[row sep=crcr]{%
3.000	1.000\\
3.000	1.000\\
};
\addplot [color=black, dashed, forget plot]
  table[row sep=crcr]{%
1.000	20.000\\
1.000	20.000\\
};
\addplot [color=black, dashed, forget plot]
  table[row sep=crcr]{%
2.000	18.000\\
2.000	19.000\\
};
\addplot [color=black, dashed, forget plot]
  table[row sep=crcr]{%
3.000	1.000\\
3.000	1.000\\
};
\addplot [color=black, forget plot]
  table[row sep=crcr]{%
0.887	20.000\\
1.113	20.000\\
};
\addplot [color=black, forget plot]
  table[row sep=crcr]{%
1.888	20.000\\
2.112	20.000\\
};
\addplot [color=black, forget plot]
  table[row sep=crcr]{%
2.888	1.000\\
3.112	1.000\\
};
\addplot [color=black, forget plot]
  table[row sep=crcr]{%
0.887	20.000\\
1.113	20.000\\
};
\addplot [color=black, forget plot]
  table[row sep=crcr]{%
1.888	18.000\\
2.112	18.000\\
};
\addplot [color=black, forget plot]
  table[row sep=crcr]{%
2.888	1.000\\
3.112	1.000\\
};
\addplot [color=blue, forget plot]
  table[row sep=crcr]{%
0.775	20.000\\
0.775	20.000\\
1.225	20.000\\
1.225	20.000\\
0.775	20.000\\
};
\addplot [color=blue, forget plot]
  table[row sep=crcr]{%
1.775	19.000\\
1.775	20.000\\
2.225	20.000\\
2.225	19.000\\
1.775	19.000\\
};
\addplot [color=blue, forget plot]
  table[row sep=crcr]{%
2.775	1.000\\
2.775	1.000\\
3.225	1.000\\
3.225	1.000\\
2.775	1.000\\
};
\addplot [color=red, forget plot]
  table[row sep=crcr]{%
0.775	20.000\\
1.225	20.000\\
};
\addplot [color=red, forget plot]
  table[row sep=crcr]{%
1.775	20.000\\
2.225	20.000\\
};
\addplot [color=red, forget plot]
  table[row sep=crcr]{%
2.775	1.000\\
3.225	1.000\\
};
\addplot [color=blue, draw=none, mark=+, mark options={solid, red}, forget plot]
  table[row sep=crcr]{%
1.000	19.000\\
1.000	19.000\\
1.000	19.000\\
1.000	19.000\\
1.000	19.000\\
1.000	19.000\\
1.000	19.000\\
1.000	19.000\\
1.000	19.000\\
1.000	19.000\\
1.000	19.000\\
1.000	19.000\\
1.000	19.000\\
1.000	19.000\\
1.000	19.000\\
1.000	19.000\\
1.000	19.000\\
1.000	19.000\\
1.000	19.000\\
1.000	19.000\\
1.000	19.000\\
1.000	19.000\\
1.000	19.000\\
1.000	19.000\\
1.000	19.000\\
1.000	19.000\\
1.000	19.000\\
1.000	19.000\\
1.000	19.000\\
1.000	19.000\\
1.000	19.000\\
1.000	19.000\\
1.000	19.000\\
1.000	19.000\\
1.000	19.000\\
1.000	19.000\\
};
\addplot [color=blue, draw=none, mark=+, mark options={solid, red}, forget plot]
  table[row sep=crcr]{%
2.000	16.000\\
2.000	16.000\\
2.000	16.000\\
2.000	17.000\\
2.000	17.000\\
2.000	17.000\\
2.000	17.000\\
2.000	17.000\\
2.000	17.000\\
2.000	17.000\\
2.000	17.000\\
2.000	17.000\\
2.000	17.000\\
2.000	17.000\\
2.000	17.000\\
2.000	17.000\\
2.000	17.000\\
};
\addplot [color=blue, draw=none, mark=+, mark options={solid, red}, forget plot]
  table[row sep=crcr]{%
3.000	2.000\\
3.000	2.000\\
3.000	2.000\\
3.000	2.000\\
3.000	2.000\\
};
\node[above, align=center, fill=white]
at (1cm,-0.55cm) {0.1};
\node[above, align=center, fill=white]
at (3cm,-0.55cm) {1};
\node[above, align=center, fill=white]
at (5cm,-0.55cm) {10};
\end{axis}
\end{tikzpicture}%
	\caption{Effect of edge weights on the number of LRUs in $S^*$}
	\label{fig:edge_weights}
\end{figure}

Based on Figure \ref{fig:edge_weights}, the instances where the downtime cost per time unit is low, have many small LRUs (each part is a LRU in itself in the extreme case). These solutions prefer small LRUs because they have lower purchase costs. As the cost for a single time unit of downtime increases, we see that the optimal solution prefers fewer LRUs that each become larger, because such larger LRUs enable faster replacement and thus lower downtime costs. This explains, for example, why we observe that the consumer electronics industry with low values for $q$ has rather small LRUs. On the other end of the spectrum, capital intensive industries such as the semiconductor industry or the aviation industry have high values for $q$, and they tend to opt for larger LRUs which enable faster replacement. Both these phenomena are confirmed by the numerical results of our model.

The second effect that we study considers the complexity of the system, and how this affects the costs of the optimal LRU design. We restrict our attention, for now, on the number of edges in the connection graph $G$ that describes system complexity. We vary how strongly various parts are connected to each other by altering $\delta$. A low (high) value of $\delta$ corresponds to lesser (more) connected parts. We are interested in the effect that the number of connections in $G$ has on the costs, because this provides a justification of whether to avoid many connections between parts in order to reduce the total cost. For our analysis, we keep $\delta_E$ and $q$ constant at $\delta_E=1$ and $q=1$. The results are presented in Figure \ref{fig:effect_delta}.
\begin{figure}[htbp!]
	\centering
	\setlength\figureheight{6cm} 
	\setlength\figurewidth{6cm}
%
%
\begin{tikzpicture}

\begin{axis}[%
width=\figurewidth,
height=\figureheight,
at={(0\figurewidth,0\figureheight)},
scale only axis,
clip=false,
separate axis lines,
every outer x axis line/.append style={black},
every x tick label/.append style={font=\color{black}},
every x tick/.append style={black},
xmin=0.5,
xmax=3.5,
xtick={1,2,3},
xlabel={$\overline{\delta}$},
every outer y axis line/.append style={black},
every y tick label/.append style={font=\color{black}},
every y tick/.append style={black},
ymin=18,
ymax=175,
ylabel={$\pi(S^*)$},
axis background/.style={fill=white}
]
\addplot [color=black, dashed, forget plot]
  table[row sep=crcr]{%
1.000	64.843\\
1.000	86.616\\
};
\addplot [color=black, dashed, forget plot]
  table[row sep=crcr]{%
2.000	94.002\\
2.000	124.122\\
};
\addplot [color=black, dashed, forget plot]
  table[row sep=crcr]{%
3.000	122.383\\
3.000	158.591\\
};
\addplot [color=black, dashed, forget plot]
  table[row sep=crcr]{%
1.000	29.282\\
1.000	50.266\\
};
\addplot [color=black, dashed, forget plot]
  table[row sep=crcr]{%
2.000	43.861\\
2.000	73.682\\
};
\addplot [color=black, dashed, forget plot]
  table[row sep=crcr]{%
3.000	60.725\\
3.000	97.258\\
};
\addplot [color=black, forget plot]
  table[row sep=crcr]{%
0.887	86.616\\
1.113	86.616\\
};
\addplot [color=black, forget plot]
  table[row sep=crcr]{%
1.888	124.122\\
2.112	124.122\\
};
\addplot [color=black, forget plot]
  table[row sep=crcr]{%
2.888	158.591\\
3.112	158.591\\
};
\addplot [color=black, forget plot]
  table[row sep=crcr]{%
0.887	29.282\\
1.113	29.282\\
};
\addplot [color=black, forget plot]
  table[row sep=crcr]{%
1.888	43.861\\
2.112	43.861\\
};
\addplot [color=black, forget plot]
  table[row sep=crcr]{%
2.888	60.725\\
3.112	60.725\\
};
\addplot [color=blue, forget plot]
  table[row sep=crcr]{%
0.775	50.266\\
0.775	64.843\\
1.225	64.843\\
1.225	50.266\\
0.775	50.266\\
};
\addplot [color=blue, forget plot]
  table[row sep=crcr]{%
1.775	73.682\\
1.775	94.002\\
2.225	94.002\\
2.225	73.682\\
1.775	73.682\\
};
\addplot [color=blue, forget plot]
  table[row sep=crcr]{%
2.775	97.258\\
2.775	122.383\\
3.225	122.383\\
3.225	97.258\\
2.775	97.258\\
};
\addplot [color=red, forget plot]
  table[row sep=crcr]{%
0.775	57.644\\
1.225	57.644\\
};
\addplot [color=red, forget plot]
  table[row sep=crcr]{%
1.775	83.871\\
2.225	83.871\\
};
\addplot [color=red, forget plot]
  table[row sep=crcr]{%
2.775	109.979\\
3.225	109.979\\
};
\addplot [color=blue, draw=none, mark=+, mark options={solid, red}, forget plot]
  table[row sep=crcr]{%
1.000	23.230\\
1.000	87.911\\
1.000	89.587\\
1.000	90.426\\
1.000	91.161\\
1.000	92.212\\
1.000	92.717\\
1.000	95.371\\
1.000	104.042\\
};
\addplot [color=blue, draw=none, mark=+, mark options={solid, red}, forget plot]
  table[row sep=crcr]{%
2.000	41.288\\
2.000	124.661\\
2.000	127.435\\
2.000	128.896\\
2.000	131.183\\
2.000	135.048\\
2.000	136.054\\
};
\addplot [color=blue, draw=none, mark=+, mark options={solid, red}, forget plot]
  table[row sep=crcr]{%
3.000	161.070\\
3.000	162.856\\
3.000	163.316\\
3.000	166.774\\
3.000	167.664\\
3.000	169.684\\
3.000	170.813\\
3.000	171.888\\
};
\node[above, align=center, fill=white]
at (1cm,-0.55cm) {2};
\node[above, align=center, fill=white]
at (3cm,-0.55cm) {3};
\node[above, align=center, fill=white]
at (5cm,-0.55cm) {4};
\end{axis}
\end{tikzpicture}%
	\caption{Effect of the number of connections in $G$ on $\pi(S^*)$}
	\label{fig:effect_delta}
\end{figure}
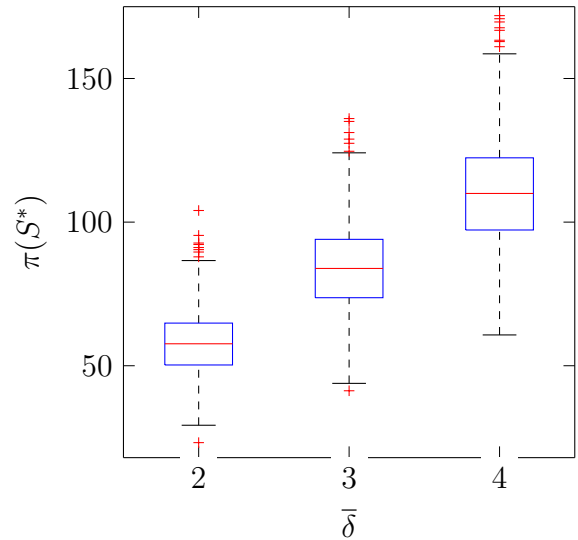

We observe that more connections in the connection graph $G$ result in cost increases, because we need to disconnect more edges in order to remove a LRU. This has an important managerial implication, as engineers should be urged to reduce the number of connections in systems to be developed. Thus, it may be wise for a company to invest extra in a system's design such that the number of connections in $G$ is reduced. An example wherein few number of connections lead to low costs is a bicycle. A typical connection graph of bicycles has few connections, and consequently relatively low replacement cost because we only need to disconnect few connections in case a part fails.

Finally, we also study the effect that system complexity has on the costs of the optimal solution $\pi(S^*)$, when we consider the number of predecessor--successor relationships. A lower value of $\delta_E$ indicates that fewer predecessor--successor relationships exist in the precedence graph $D$. Similar to the foregoing, we keep the other parameters constant at $\delta=3$ and $q=1$. The results for different values of $\delta_E$ are depicted in Figure \ref{fig:effect_delta_E} and we observe a similar behavior to changes in $\delta$.
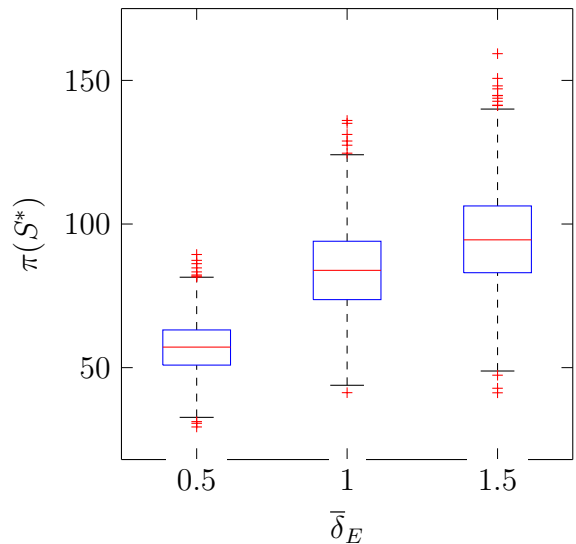
\begin{figure}[htbp!]
	\centering
	\setlength\figureheight{6cm} 
	\setlength\figurewidth{6cm}
%
%
\begin{tikzpicture}

\begin{axis}[%
width=\figurewidth,
height=\figureheight,
at={(0\figurewidth,0\figureheight)},
scale only axis,
clip=false,
separate axis lines,
every outer x axis line/.append style={black},
every x tick label/.append style={font=\color{black}},
every x tick/.append style={black},
xmin=0.5,
xmax=3.5,
xtick={1,2,3},
xlabel={$\overline{\delta}_E$},
every outer y axis line/.append style={black},
every y tick label/.append style={font=\color{black}},
every y tick/.append style={black},
ymin=18,
ymax=175,
ylabel={$\pi(S^*)$},
axis background/.style={fill=white}
]
\addplot [color=black, dashed, forget plot]
  table[row sep=crcr]{%
1.000	63.126\\
1.000	81.478\\
};
\addplot [color=black, dashed, forget plot]
  table[row sep=crcr]{%
2.000	94.002\\
2.000	124.122\\
};
\addplot [color=black, dashed, forget plot]
  table[row sep=crcr]{%
3.000	106.295\\
3.000	140.017\\
};
\addplot [color=black, dashed, forget plot]
  table[row sep=crcr]{%
1.000	32.691\\
1.000	50.884\\
};
\addplot [color=black, dashed, forget plot]
  table[row sep=crcr]{%
2.000	43.861\\
2.000	73.682\\
};
\addplot [color=black, dashed, forget plot]
  table[row sep=crcr]{%
3.000	48.827\\
3.000	83.051\\
};
\addplot [color=black, forget plot]
  table[row sep=crcr]{%
0.887	81.478\\
1.113	81.478\\
};
\addplot [color=black, forget plot]
  table[row sep=crcr]{%
1.888	124.122\\
2.112	124.122\\
};
\addplot [color=black, forget plot]
  table[row sep=crcr]{%
2.888	140.017\\
3.112	140.017\\
};
\addplot [color=black, forget plot]
  table[row sep=crcr]{%
0.887	32.691\\
1.113	32.691\\
};
\addplot [color=black, forget plot]
  table[row sep=crcr]{%
1.888	43.861\\
2.112	43.861\\
};
\addplot [color=black, forget plot]
  table[row sep=crcr]{%
2.888	48.827\\
3.112	48.827\\
};
\addplot [color=blue, forget plot]
  table[row sep=crcr]{%
0.775	50.884\\
0.775	63.126\\
1.225	63.126\\
1.225	50.884\\
0.775	50.884\\
};
\addplot [color=blue, forget plot]
  table[row sep=crcr]{%
1.775	73.682\\
1.775	94.002\\
2.225	94.002\\
2.225	73.682\\
1.775	73.682\\
};
\addplot [color=blue, forget plot]
  table[row sep=crcr]{%
2.775	83.051\\
2.775	106.295\\
3.225	106.295\\
3.225	83.051\\
2.775	83.051\\
};
\addplot [color=red, forget plot]
  table[row sep=crcr]{%
0.775	57.150\\
1.225	57.150\\
};
\addplot [color=red, forget plot]
  table[row sep=crcr]{%
1.775	83.871\\
2.225	83.871\\
};
\addplot [color=red, forget plot]
  table[row sep=crcr]{%
2.775	94.492\\
3.225	94.492\\
};
\addplot [color=blue, draw=none, mark=+, mark options={solid, red}, forget plot]
  table[row sep=crcr]{%
1.000	29.354\\
1.000	30.623\\
1.000	31.214\\
1.000	81.580\\
1.000	81.870\\
1.000	82.256\\
1.000	83.333\\
1.000	84.725\\
1.000	86.201\\
1.000	87.347\\
1.000	89.389\\
};
\addplot [color=blue, draw=none, mark=+, mark options={solid, red}, forget plot]
  table[row sep=crcr]{%
2.000	41.288\\
2.000	124.661\\
2.000	127.435\\
2.000	128.896\\
2.000	131.183\\
2.000	135.048\\
2.000	136.054\\
};
\addplot [color=blue, draw=none, mark=+, mark options={solid, red}, forget plot]
  table[row sep=crcr]{%
3.000	41.244\\
3.000	42.824\\
3.000	47.369\\
3.000	141.252\\
3.000	141.373\\
3.000	142.734\\
3.000	143.791\\
3.000	144.683\\
3.000	144.767\\
3.000	147.064\\
3.000	148.099\\
3.000	150.704\\
3.000	159.309\\
};
\node[above, align=center, fill=white]
at (1cm,-0.55cm) {0.5};
\node[above, align=center, fill=white]
at (3cm,-0.55cm) {1};
\node[above, align=center, fill=white]
at (5cm,-0.55cm) {1.5};
\end{axis}
\end{tikzpicture}%
	\caption{Effect of the number of predecessor--successor relationships in $D$ on $\pi(S^*)$}
	\label{fig:effect_delta_E}
\end{figure}

The costs increase as the number of predecessor--successor relationships increases, because we need to disconnect more connections upon the failure of a LRU. Consequently, the costs of an optimal solution $\pi(S^*)$ increase when the number of connections in $D$ increases (as $\delta_E$ increases). The managerial implications of our results also align with those for $\delta$: managers should urge their designers to avoid predecessor--successor relationships in order to reduce costs. This objective may be easier to attain than avoiding connections in the connection graph $G$ by careful design. Hence, the results confirm that careful design (in terms of $G$ and $D$) is crucial to reduce the overall costs.

Finally, we numerically illustrate the cost effect of introducing a partitioning constraint. Therefore, we consider \textsc{LRU Design} and the variant without a partition constraint called \textsc{C--LRU Design}. Further details on \textsc{C--LRU Design} are included in Appendix \ref{app:cover_LRU}. Both, \textsc{C--LRU Design} and the numerical results, are based on \citet{driessen2018life}. 
We study the average relative cost difference for a given parameter combination $\Delta_{\pi}=\frac{\pi(S^*)-\pi_c(S_c^*)}{\pi_c(S_c^*)}\times 100\%$, where $S^*$ and $S_c^*$ are the optimal solutions of \textsc{LRU Design} and \textsc{C--LRU Design}, respectively. We limit our analysis to instances with $|V|\in\{20,30,40\}$, because the set partitioning formulation of \textsc{LRU Design} is efficient for each parameter combination. 

\begin{table}[htbp!]
\centering
\begin{tabular}{crrrrrrr}
\hline
					&							&	\multicolumn{2}{c}{$|V|=20$}	&	\multicolumn{2}{c}{$|V|=30$}	&	\multicolumn{2}{c}{$|V|=40$}	\bigstrut[t]\\
\cline{3-4} \cline{5-6} \cline{7-8} 
$\delta$	&	$\delta_E$	&	avg (\%) 	& max (\%) & avg (\%) & max (\%) &	avg (\%)	& max (\%)\\
2     & 0.5   & 0.74 & 1.56		& 1.07	& 3.63	& 1.20	& 4.00	 \bigstrut[t]\\
2     & 1     & 1.62 & 4.36		& 2.23	& 5.49	& 2.52	& 4.20	\\
2     & 1.5   & 0.26 & 1.05		& 0.19	& 0.62	& 0.19	& 0.51	\\
3     & 0.5   & 0.56 & 1.36		& 0.63	& 1.73	& 0.54	& 1.11	\\
3     & 1     & 0.84 & 3.53		& 0.87	& 2.45	& 1.48	& 3.12	\\
3     & 1.5   & 2.37 & 13.56	& 0.56	& 2.56	& 0.36	& 2.34	\\
4     & 0.5   & 0.00 & 0.00		& 0.13	& 1.20	& 0.08	& 0.26	\\
4     & 1     & 1.58 & 6.07		& 1.02	& 3.78	& 0.47	& 1.39	\\
4     & 1.5   & 4.22 & 13.48	& 1.80	& 6.36	& 1.32	& 3.08	\\
All	  &       & 1.34 & 13.56	& 0.94	& 6.36	& 0.91	& 4.20 \\
\hline
\end{tabular}
\caption{Relative cost differences between \textsc{LRU Design} and \textsc{C--LRU Design}}
\label{tab:cost_delta}
\end{table}

The results in Table \ref{tab:cost_delta} illustrate that we sacrifice little costs when we introduce a partitioning constraint. Combining this with the practical benefits that \textsc{LRU Design} has over \textsc{C--LRU Design}, we conclude that \textsc{LRU Design} is highly usable for practical purposes.

\section{Conclusions}
\label{sec:conclusions}
We considered an OEM or MRO that is concerned with the (re)design and maintenance of a system. If the system does not operate, the company loses money, customer goodwill or has to pay customers a downtime penalty. Therefore, the company is interested in lowering the cost for non--functioning systems by designing Line Replaceable Units (LRUs) that can be removed quickly. Furthermore, the LRUs should not be too large, because this increases a LRU's total failure rate and the LRU's purchase cost (or repair cost). Thus, the company has to determine what the optimal LRU design is that balances the replacement cost and the purchase cost (or repair cost) of LRUs.

We presented a novel model for representing the connections between parts in a system, also capturing the existing disassembly sequences. 
We used the system representation to derive an optimization model \textsc{LRU Design} that minimizes the replacement cost and the purchase cost (or repair cost) by optimizing the LRU design. Our optimization model was constrained such that a LRU design is a partition of the parts, and we saw that this constraint has strong practical benefits (avoiding replacement ambiguity, simplification of the production and testing processes, and consistency with general maintenance practice), while sacrificing very little extra costs compared to the case where we relax the partition constraint. We formulated the problem as a binary linear program and as a set partitioning problem. We proved a result infrequently encountered in research: an optimal solution to the set partitioning formulation is integer, despite a non--integral feasible polyhedron. This result follows from proving the suboptimality of LRU cycles and relating this to the matrix encoding of an optimal solution. Furthermore, the set partitioning formulation reduces the computation times and makes it useful as a feedback mechanism to assess various design alternatives and their effects on the optimal LRU design and the corresponding (after--sales) costs. Moreover, the set partitioning formulation is suitable to solve large instances, while the binary linear programming formulation is not. In addition to the computation times, we observed that optimal solutions to \textsc{LRU Design} have larger LRUs when the cost per time unit of system downtime increases, because this enables faster replacement and thus avoids large downtime cost. Finally, we found that managers should urge their designers to reduce the number of connections and predecessor--successor relationships in a system's design.

\subsubsection*{Acknowledgement}
The authors would like to thank Willem van Jaarsveld for his suggestions and remarks that aided in establishing Theorem \ref{thm:integrality}. Moreover, we would like to express our gratitude to Gerhard Woeginger for better positioning the integrality result in the literature of combinatorial optimization. Finally, we thank the reviewers for their constructive feedback and one reviewer for the generalization to superadditive failure rate functions.

The authors also express their gratitude to ASML and Dutch Railways for their cooperation in this research. Furthermore, we gratefully acknowledge the support of The Netherlands Organisation for Scientific Research.

\appendix


\section{Deriving the successor collection $H(e)$}
\label{app:algorithms}
We determine $H(e)$ for all edges $e\in E$ in polynomial time by the following polynomial algorithms, where Algorithm \ref{alg:derive_degree} is called by Algorithm \ref{alg:derive_H}.

\begin{algorithm}[h!]
\caption{Derive $H(e)$ for all edges $e\in E$}\label{alg:derive_H}
\begin{algorithmic}[1]
\Procedure{RemovalEdges}{$E,A$}
\State $\hat{E}\gets E$
\State $\tilde{E}\gets \textsc{Degree}(D(\hat{E},A))$
\While{$\tilde{E}\neq \emptyset$}
	\ForAll{$e\in \tilde{E}$}
	\State $H(e)\gets \{e\}$
	\EndFor
	\ForAll{$e\in \tilde{E}$} 
		\ForAll{$(e,z)\in A$}
			\State $H(e)\gets H(e) \cup H(z)$
		\EndFor
	\EndFor
\State $\hat{E}\gets \hat{E}\setminus\tilde{E}$
\State $\tilde{E} \gets \textsc{Degree}(D(\hat{E},A))$
\EndWhile
\State\Return $H(e)$ for all $e\in E$
\EndProcedure
\end{algorithmic}
\end{algorithm}
\begin{algorithm}[h!]
\caption{Determine all edges $e\in E$ that have no successors in $D$}\label{alg:derive_degree}
\begin{algorithmic}[1]
\Procedure{Degree}{$D$}
\State $\tilde{E}\gets\emptyset$
\ForAll{$e\in E$}
	\If{$\delta^{out}(e)=0$}
		\State $\tilde{E} \gets \tilde{E}\cup \{e\}$
	\EndIf
\EndFor
\State\Return $\tilde{E}$
\EndProcedure
\end{algorithmic}
\end{algorithm}

\section{LRU with internal links that must be broken}
\label{sec:app:chain}
The precedence graph determines which edges must {\em necessarily} be broken before a certain edge can be broken. Therefore it is possible to define a LRU $Q$ such that certain connections within LRU $Q$ must be broken in order to be able to detach LRU $Q$ from the rest of the system. While such situations do not happen often in optimal designs in practice, it is sometime inevitable. Why this cannot physically be avoided is illustrated by a simple example. 

Consider two cog-wheels with a chain between them such that motion in one cog is transmitted to the other cog. Every bicycle has such a system to transmit motion from the pedals to the rear-wheel and a picture of such a system is shown below in Figure \ref{fig:tandwielketting}. The chain consists of many links. Now one may wish to define the chain as a LRU. To remove the chain from the two-cog-wheels on which it sits, one must necessarily break the connection between two links in the chain, despite the fact that such a connection is internal to the LRU/chain.

This example can be formalized in the connection and precedence graph shown in Figures \ref{fig:connectiontandwielketting} and \ref{fig:precedencetandwielketting} respectively. In these graphs $A$ and $B$ are the cog-wheels and nodes 1 to 12 correspond to the links in the chain. In this example the chain LRU is given by $Q=\{1,2,\ldots,12\}$ so that
$\Gamma(Q)=\{\{A,3\},\{B,3\},\{3,4\}\}$. The fact that that $\{3,4\}$ is internal to chain reflects the fact that it is physically impossible to remove the chain from the cog-wheels without first breaking a connection between two links in the chain. Thus the precedence graph models the physical necessity of breaking certain connections before it is possible to break/access other connections. The fact that $\Gamma(Q)$ may contain connections that are internal to an LRU $Q$ is a modeling feature that allows us to model systems appropriately. However, we do observe that most LRUs in optimal designs do not have internal connections that need to be broken.

\begin{figure}[h!]
\begin{center}
\includegraphics[width=0.3\textwidth]{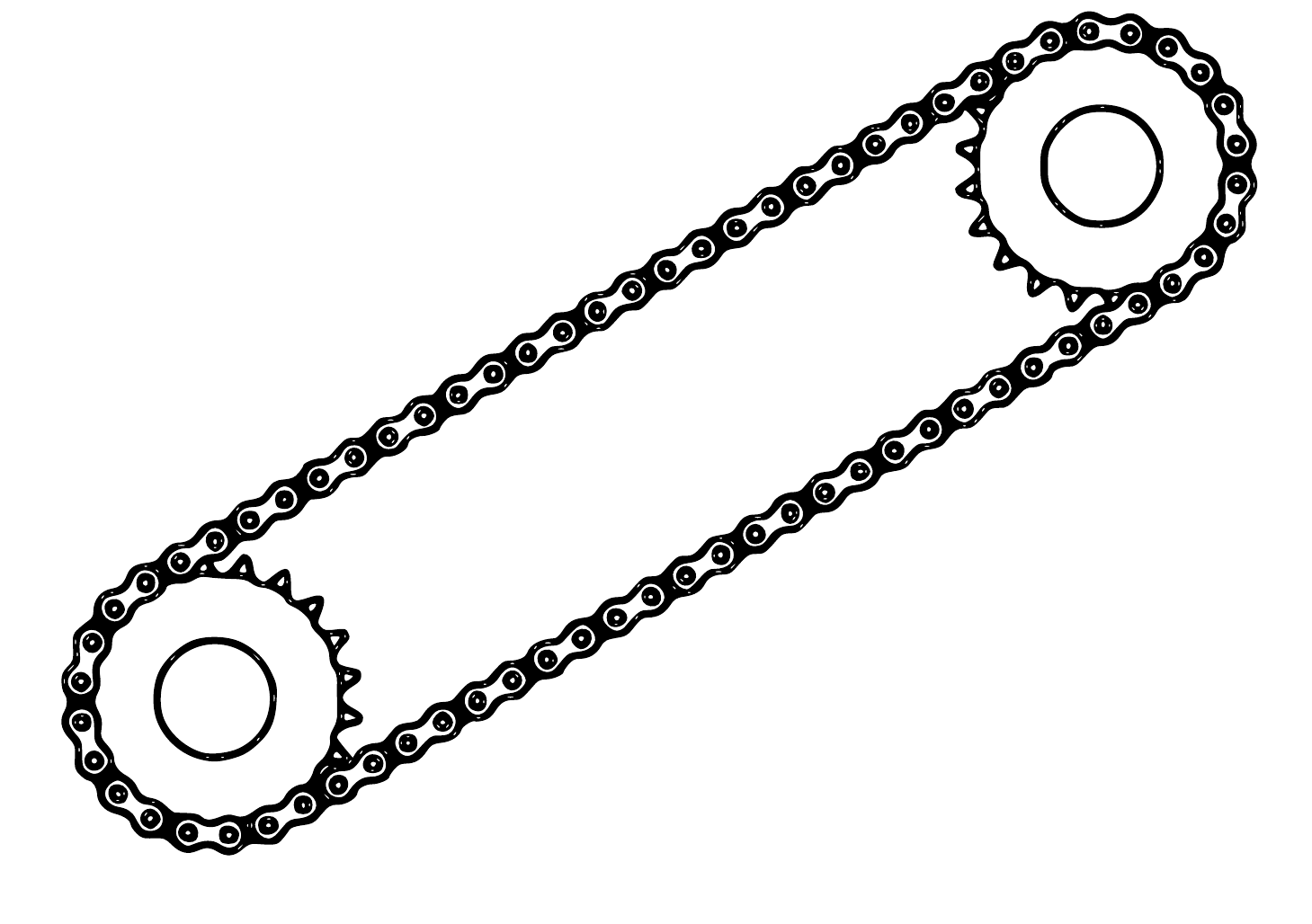}
\end{center}
\caption{Two cog-wheels connected with a chain}
\label{fig:tandwielketting}
\end{figure}

\begin{figure}[h!]
\begin{center}
\begin{tikzpicture}[main/.style = {draw, circle}] 
\node[main] (A) {A}; 
\node[main] (1) [above of=A] {1};
\node[main] (2) [right of=1] {2}; 
\node[main] (3) [right of=2] {3};
\node[main] (4) [right of=3] {4};
\node[main] (5) [right of=4] {5};
\node[main] (B) [below of=5] {B};
\node[main] (6) [right of=B] {6};
\node[main] (7) [below of=B] {7};
\node[main] (8) [left of=7] {8};
\node[main] (9) [left of=8] {9};
\node[main] (10) [left of=9] {10};
\node[main] (11) [left of=10] {11};
\node[main] (12) [left of=A] {12};
\draw (1) -- (2);
\draw (2) -- (3);
\draw (3) -- (4);
\draw (4) -- (5);
\draw (5) -- (6);
\draw (6) -- (7);
\draw (7) -- (8);
\draw (8) -- (9);
\draw (9) -- (10);
\draw (10) -- (11);
\draw (11) -- (12);
\draw (12) -- (1);
\draw (A) -- (3);
\draw (B) -- (3);
\end{tikzpicture} 
\end{center}
\caption{Connection graph for two cog-wheels with a chain}
\label{fig:connectiontandwielketting}
\end{figure}
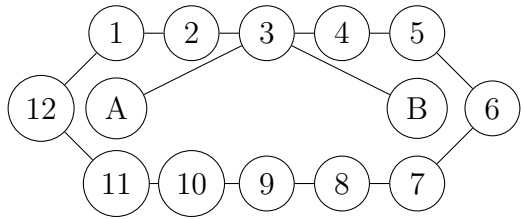

\begin{figure}[h!]
\begin{center}
\begin{tikzpicture}[node distance={20mm}, main/.style = {draw, circle}] 
\node[main] (A3) {$\{A,3\}$}; 
\node[main] (34) [right of=A3] {$\{3,4\}$};
\node[main] (B3) [right of=34] {$\{B,3\}$}; 
\draw[->] (A3) -- (34);
\draw[->] (B3) -- (34);
\end{tikzpicture} 
\end{center}
\caption{Precedence graph for two cog-wheels with a chain}
\label{fig:precedencetandwielketting}
\end{figure}
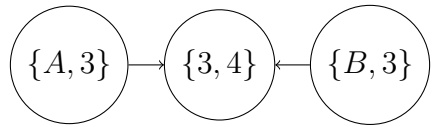

%
\section{LRU Design without partitioning constraint}
\label{app:cover_LRU}
In this appendix, we explain the model of designing LRUs without considering a partitioning constraint. We call this problem \textsc{C--LRU Design}. Let a system be represented by a connection graph $G=(V,E)$ and a precedence graph $D=(E,A)$, with $V$, $E$ and $A$ corresponding to the vertex set, edge set and arc set, respectively. Furthermore, we let $\lambda(v)>0$ and $\ell(v)>0$ be the failure rate and purchase cost of part $v\in V$, and we define $w(e)>0$ as the cost of breaking edge $e\in E$. An arc $(\{u,v\},\{v,x\})\in A$ denotes that we have to break $\{v,x\}$ prior to breaking $\{u,v\}$.

A part $v\in V$ belongs to at least one LRU. That is, part $v$ is replaced upon its own failure, but it may also be replaced upon the failure of a part $u\in V:u\neq v$. Therefore, we represent a LRU differently from \textsc{LRU Design}. We let a LRU be a tuple characterized by a replacement set and a failure set, i.e., $Q=(R_Q,F_Q)$ where $Q$ is the LRU, $R_Q\subseteq V$ is the replacement set, and $F_Q\subseteq R_Q$ is the failure set. The failure of a part in the failure set 
triggers replacement of the LRU. The replacement set is 
replaced if any of the vertices in the failure set fails. 
Furthermore, we assume that a part $v\in V$ belongs to exactly one failure set, i.e., the failure sets partition $V$. 

Next, we study what happens when a LRU $Q$ 	fails, or technically what happens when a part $v\in F_Q$ fails. In this case, we have to break all edges $e\in\Gamma(R_Q)$, where $\Gamma(R_Q)=\bigcup_{e\in B(R_Q)}H(e)$. The failure rate of LRU $Q$ is given by $\sum_{v\in F_Q}\lambda(v)$, because all parts $v\in F_Q$ induce the replacement of $R_Q$. 
Similarly, the total purchase cost of LRU $Q$ is given by $\sum_{v\in R_Q}\ell (v)$. This yields the following average cost per time unit for LRU $Q$: 
$$\omega_c(Q)=\sum_{e\in \Gamma(R_Q)}w(e)\sum_{v\in F_Q}\lambda(v)+\sum_{u\in R_Q}\ell (u)\sum_{v\in F_Q}\lambda(v).$$

We are interested in determining the optimal LRU design. Let $S_c$ be a collection of LRUs such that $\emptyset\not\in S_c$ and each part $v\in V$ is included in at least one replacement set and in exactly one failure set; i.e., $\bigcup_{Q\in S_c}F_Q=V$, $F_{Q}\cap F_{Q^{\prime}}=\emptyset$ for all $Q, Q^{\prime}\in S_c:F_{Q}\neq F_{Q^{\prime}}$, and $F_Q\subseteq R_Q$ for each LRU $Q\in S_c$. The average costs per time unit of a LRU design $S_c$ are given by
\begin{equation}
\begin{split}
\pi_c(S_c)&=\sum_{Q\in S_c}\omega_c(Q)=\sum_{Q\in S_c}\sum_{b\in \Gamma(R_Q)}w(b)\sum_{v\in F_Q}\lambda(v)+\sum_{Q\in S_c}\sum_{u\in R_Q}\ell (u)\sum_{v\in F_Q}\lambda(v).
\end{split}
\label{eq:cost_cover}
\end{equation}

Next, we define \textsc{C--LRU design} as: What is the LRU Design $S_c$ that minimizes $\pi_c(S_c)$?

For implementation, we use the binary programming formulation of \textsc{C--LRU Design} as explained in \citet{driessen2018life}, which is similar to the binary programming formulation of \textsc{LRU Design}.

\section{Lemma used in the proof of Theorem \ref{thm:intersection_cycle}}
\label{app:lemma_removal_path}

The proof of Theorem \ref{thm:intersection_cycle}, which states that an optimal solution to LPM does not contain a LRU cycle, uses Lemma \ref{lem:removal_path}. This appendix contains the statement, an interpretation and the proof of that lemma. Recall that we define the set of edges that is broken for a LRU $X$ but not for a LRU $Y$ by $\mathcal{F}(X,Y)=\Gamma(X)\setminus\Gamma(Y)$. Furthermore, recall that we use modular arithmetic for the indices of LRUs that form a cycle $Q_i$, $Q_{i-1}$, and $Q_{i+1}$ with $1\leq i\leq n$, $n+1\equiv 1 \pmod{n}$, and $Q_0\equiv Q_n$.

\begin{lemma}
Given a solution $x$ to LPM that contains a LRU cycle $C=\{Q_1,Q_2,\ldots,Q_n\}\subseteq S$ with minimal $n$, we have $\Gamma(Q_i\cap Q_{i+1})\setminus \mathcal{F}(Q_i\cap Q_{i+1},Q_i)\subseteq \Gamma(Q_i)\setminus\mathcal{F}(Q_i\cap Q_{i-1},Q_{i-1})$ and $\Gamma(Q_i\setminus Q_{i+1})\setminus \mathcal{F}(Q_i\cap Q_{i+1},Q_i)\subseteq \Gamma(Q_i)\setminus \mathcal{F}(Q_i\cap Q_{i+1},Q_{i+1})$. Furthermore, we have $\Gamma(Q_i\cap Q_{i-1})\setminus \mathcal{F}(Q_i\cap Q_{i-1},Q_i)\subseteq \Gamma(Q_i)\setminus\mathcal{F}(Q_i\cap Q_{i+1},Q_{i+1})$ and  $\Gamma(Q_i\setminus Q_{i-1})\setminus \mathcal{F}(Q_i\cap Q_{i-1},Q_i)\subseteq \Gamma(Q_i)\setminus \mathcal{F}(Q_i\cap Q_{i-1},Q_{i-1})$.
\label{lem:removal_path}
\end{lemma}

Lemma \ref{lem:removal_path} can be interpreted by considering Figure \ref{fig:illustrate_lemma2}, which is a simplified part of a solution in which we depict only the LRUs $Q_{i-1}$, $Q_i$, and $Q_{i+1}$, and we assume that $A=\emptyset$. The first claim in Lemma \ref{lem:removal_path} states that the set of edges that are broken for LRU $Q_i$ as well as for the intersection $Q_i\cap Q_{i+1}$ (the edges that cross the thin dotted line in Figure \ref{fig:illustrate_lemma2}) is a subset of the edges that are broken for $Q_i$ except for the edges that are broken for $Q_i\cap Q_{i-1}$ but not for $Q_{i-1}$ (the edges that cross the thin solid line in Figure \ref{fig:illustrate_lemma2}). Secondly, the set of edges that is broken for both $Q_i$ and $Q_i\setminus Q_{i+1}$ (the edges that cross the thick dotted line in Figure \ref{fig:illustrate_lemma2}) is a subset of the edges that are broken for $Q_i$ except for the edges that are broken for $Q_i\cap Q_{i+1}$ but not for $Q_{i+1}$ (the edges that cross the thick solid line in Figure \ref{fig:illustrate_lemma2}). 
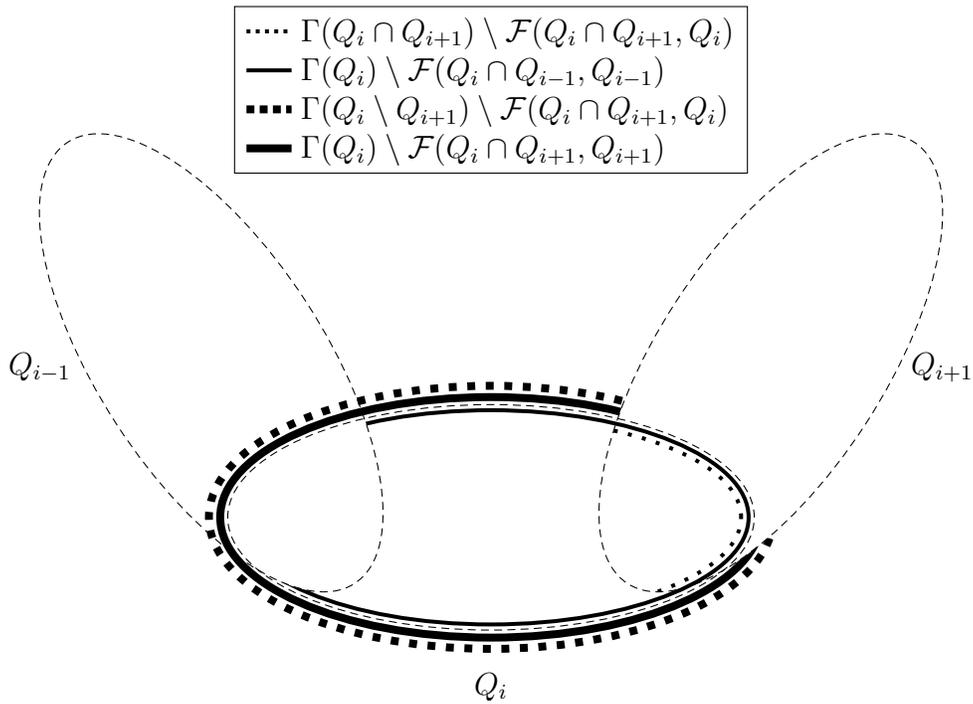
\begin{figure}[htbp!]
\centering
\begin{tikzpicture}[scale=1,transform shape]
\tikzset{LabelStyle/.style = {draw=none,fill=none,text=black}}
\tikzset{EdgeStyle/.style   = {draw,fill=none}}
	\clip (-7,-2.5) rectangle (7,7);
	\tikzset{VertexStyle/.style = {circle,draw=none,fill=white}}
	\node[VertexStyle] (100) at (0,-2.25) {$Q_i$};
	\node[VertexStyle] (101) at (-6,2) {$Q_{i-1}$};
	\node[VertexStyle] (102) at (6,2) {$Q_{i+1}$};

 	\draw[densely dashed] (0,0) ellipse (3.5 and 1.5);
	\draw[densely dashed,rotate=-33] (2,3.75) ellipse (1.5 and 3.5);
	\draw[densely dashed,rotate=33] (-2,3.75) ellipse (1.5 and 3.5);
	
	\begin{scope}
		\clip[rotate=-33] (2,3.75) ellipse (1.5 and 3.5);
  		\draw[loosely dotted,line width=1.5pt,draw=black] (0,0) ellipse (3.325 and 1.325);
	\end{scope}
	
	\begin{scope}
	\clip[rotate=33] (-3.5,-3.5) rectangle (3.5,3.5)
      	  (-2,3.75) ellipse (1.5 and 3.5);
	\draw[line width=1.5pt,draw=black] (0,0) ellipse (3.425 and 1.425);
	\end{scope}
	
	\begin{scope}
	\clip[rotate=-33] (-3.5,-3.5) rectangle (3.5,3.5)
      	  (2,3.75) ellipse (1.5 and 3.5);
	\draw[line width=3pt,draw=black] (0,0) ellipse (3.6 and 1.6);
	\draw[loosely dotted,line width=3pt,draw=black] (0,0) ellipse (3.75 and 1.75);
	\end{scope}

	\begin{customlegend}[legend cell align=left,
	legend entries={
	$\Gamma(Q_i\cap Q_{i+1})\setminus {\mathcal{F}(Q_i\cap Q_{i+1},Q_i)}$,
	$\Gamma(Q_i)\setminus{\mathcal{F}(Q_i\cap Q_{i-1},Q_{i-1})}$,
	$\Gamma(Q_i\setminus Q_{i+1})\setminus {\mathcal{F}(Q_i\cap Q_{i+1},Q_i)}$,
	$\Gamma(Q_i)\setminus {\mathcal{F}(Q_i\cap Q_{i+1},Q_{i+1})}$},
	legend style={at={(0,5)},anchor=mid}]
    \addlegendimage{black,line width=1.5pt,dotted,sharp plot}
    \addlegendimage{black,line width=1.5pt,sharp plot}
    \addlegendimage{black,line width=3pt,dotted,sharp plot}
    \addlegendimage{black,line width=3pt,sharp plot}
    \end{customlegend}

\end{tikzpicture}
\caption{Interpretation of Lemma \ref{lem:removal_path}. The edges that cross the lines shown in the legend are contained in the corresponding sets of the legend.}
\label{fig:illustrate_lemma2}
\end{figure}
The third and fourth claim from Lemma \ref{lem:removal_path} can be interpreted similarly to the first two claims, where $Q_{i+1}$ is replaced by $Q_{i-1}$ and vice versa.

The same interpretation -- as illustrated in Figure \ref{fig:illustrate_lemma2} -- holds when $A\neq\emptyset$, but this makes exposition cumbersome. The proof of Lemma \ref{lem:removal_path} is given below, for arbitrary $A$.

\begin{proof}[Proof of Lemma \ref{lem:removal_path}.]
Let $x$ be a solution to LPM that contains a LRU cycle $C=\{Q_1,Q_2,\ldots,Q_n\}\subseteq S$ with minimal $n$. First, we prove that if $e=\{u,v\}\in\mathcal{F}(Q_i\cap Q_{i-1},Q_{i-1})$ then $u,v \in Q_{i-1}$. Subsequently, we use this result to show that $\Gamma(Q_i\cap Q_{i+1})\setminus \mathcal{F}(Q_i\cap Q_{i+1},Q_i)\subseteq \Gamma(Q_i)\setminus\mathcal{F}(Q_i\cap Q_{i-1},Q_{i-1})$. Similarly, we show that if $e=\{u,v\}\in\mathcal{F}(Q_i\cap Q_{i+1},Q_{i+1})$ then $u,v\in Q_{i+1}$, and we use this to prove that $\Gamma(Q_i\setminus Q_{i+1})\setminus\mathcal{F}(Q_i\cap Q_{i+1},Q_i)\subseteq \Gamma(Q_i)\setminus\mathcal{F}(Q_i\cap Q_{i+1},Q_{i+1})$.

Let $e=\{u,v\}\in\mathcal{F}(Q_i\cap Q_{i-1},Q_{i-1})$. If $u\in Q_{i-1}$ and $v\not\in Q_{i-1}$, it follows directly that $e\in \Gamma(Q_{i-1})$ as $e\in B(Q_{i-1})$. This contradicts that $e\in\mathcal{F}(Q_i\cap Q_{i-1},Q_{i-1})$. If $u,v\not\in Q_{i-1}$, then there exists a (sub)path $(e_1,e_2,\ldots,e)$ in the precedence graph with $e_1=\{u_1,v_1\}: u_1\in Q_i\cap Q_{i-1}$. Consequently, there is an $e_j=\{u_j,v_j\}:u_j\in Q_{i-1}, v_j \not\in Q_{i-1}$ in this path, due to the assumption that $(\{\overline{u},\overline{v}\},\{\overline{v},\overline{x}\})\in A$ for $\overline{u},\overline{v},\overline{x}\in V$. Therefore, $e_j\in \Gamma(Q_{i-1})$ and thus $e\in\Gamma(Q_{i-1})$. Subsequently, $e\not\in\mathcal{F}(Q_i\cap Q_{i-1},Q_{i-1})$, which is a contradiction. Therefore, if $e=\{u,v\}\in\mathcal{F}(Q_i\cap Q_{i-1},Q_{i-1})$, then $u,v\in Q_{i-1}$.

Next, we show that $\Gamma(Q_i\cap Q_{i+1})\setminus \mathcal{F}(Q_i\cap Q_{i+1},Q_i)\subseteq \Gamma(Q_i)\setminus\mathcal{F}(Q_i\cap Q_{i-1},Q_{i-1})$. Let $e=\{u,v\}\in \Gamma(Q_i\cap Q_{i+1})$ and suppose that $e\in\mathcal{F}(Q_i\cap Q_{i-1},Q_{i-1})$. There exists a (sub)path $(e_1,e_2,\ldots,e)$ in the precedence graph with $e_1=\{u_1,v_1\}:u_1 \in Q_i\cap Q_{i+1}$. Also, $u_1\not\in Q_{i-1}$, because $C$ is a LRU cycle with minimal $n$ (implying that $Q_{i-1}\cap Q_i\cap Q_{i+1}=\emptyset$, otherwise one can omit $Q_i$ and obtain a LRU cycle with fewer LRUs, which is a contradiction). Furthermore, $u,v\in Q_{i-1}$ as $e=\{u,v\}\in\mathcal{F}(Q_i\cap Q_{i-1},Q_{i-1})$ (see above). Hence, there exists $e_j=\{u_j,v_j\}:u_j\in Q_{i-1},v_j\not\in Q_{i-1}$ in this path, as $(\{\overline{u},\overline{v}\},\{\overline{v},\overline{x}\})\in A$ for $\overline{u},\overline{v},\overline{x}\in V$. Therefore, $e_j\in\Gamma(Q_{i-1})$ and thus $e\in\Gamma(Q_{i-1})$. Consequently, $e\not\in\mathcal{F}(Q_i\cap Q_{i-1},Q_{i-1})$ which is a contradiction. Hence, if $e\in\Gamma(Q_i\cap Q_{i+1})$, then $e\not\in\mathcal{F}(Q_i\cap Q_{i-1},Q_{i-1})$. 
Next, we consider $\Gamma(Q_i\cap Q_{i+1})\setminus \mathcal{F}(Q_i\cap Q_{i+1},Q_i)$, and have that $\Gamma(Q_i\cap Q_{i+1})\setminus \mathcal{F}(Q_i\cap Q_{i+1},Q_i)\subseteq \Gamma(Q_i)$ as $\mathcal{F}(Q_i\cap Q_{i+1},Q_i)=\Gamma(Q_i\cap Q_{i+1})\setminus \Gamma(Q_i)$. Furthermore, each $e\in \Gamma(Q_i\cap Q_{i+1})\setminus \mathcal{F}(Q_i\cap Q_{i+1},Q_i)$ satisfies $e\in \Gamma(Q_i\cap Q_{i+1})$ and thus $e\not\in \mathcal{F}(Q_i\cap Q_{i-1},Q_{i-1})$, as we proved in the foregoing. Hence, we have $\Gamma(Q_i\cap Q_{i+1})\setminus \mathcal{F}(Q_i\cap Q_{i+1},Q_i)\subseteq \Gamma(Q_i)\setminus\mathcal{F}(Q_i\cap Q_{i-1},Q_{i-1})$.

Let us now prove that if $e=\{u,v\}\in\mathcal{F}(Q_i\cap Q_{i+1},Q_{i+1})$, then $u,v\in Q_{i+1}$. The proof is identical to proving that if $e\in\mathcal{F}(Q_i\cap Q_{i-1},Q_{i-1})$ then $u,v\in Q_{i-1}$, but $Q_{i-1}$ is replaced by $Q_{i+1}$. 
We continue by proving that $\Gamma(Q_i\setminus Q_{i+1})\setminus\mathcal{F}(Q_i\cap Q_{i+1},Q_i)\subseteq \Gamma(Q_i)\setminus\mathcal{F}(Q_i\cap Q_{i+1},Q_{i+1})$. Let $e=\{u,v\}\in\Gamma(Q_i\setminus Q_{i+1})$ and suppose that $e\in\mathcal{F}(Q_i\cap Q_{i+1},Q_{i+1})$. There exists a (sub)path $(e_1,e_2,\ldots,e)$ in the precedence graph with $e_1=\{u_1,v_1\}:u_1\in Q_i\setminus Q_{i+1}$. Moreover, $u,v\in Q_{i+1}$ as $e=\{u,v\}\in\mathcal{F}(Q_i\cap Q_{i+1},Q_{i+1})$ (from foregoing). Hence, there exists $e_j=\{u_j,v_j\}:u_j\in Q_{i+1},v_j\not\in Q_{i+1}$ in this path, as $(\{\overline{u},\overline{v}\},\{\overline{v},\overline{x}\})\in A$ for $\overline{u},\overline{v},\overline{x}\in V$. Therefore, $e_j\in\Gamma(Q_{i+1})$ and thus $e\in \Gamma(Q_{i+1})$. As a consequence, $e\not\in\mathcal{F}(Q_i\cap Q_{i+1},Q_{i+1})$, which is a contradiction. Hence, if $e\in \Gamma(Q_i\setminus Q_{i+1})$, then $e\not\in\mathcal{F}(Q_i\cap Q_{i+1},Q_{i+1})$. 
Next, we consider $\Gamma(Q_i\setminus Q_{i+1})\setminus\mathcal{F}(Q_i \cap Q_{i+1},Q_i)$. We observe that $\mathcal{F}(Q_i\cap Q_{i+1},Q_i)=\mathcal{F}(Q_i\setminus Q_{i+1},Q_i)$, because $Q_i\cap Q_{i+1}$ and $Q_i\setminus Q_{i+1}$ partition $Q_i$, and thus the edges existing between $Q_i\cap Q_{i+1}$ and $Q_i\setminus Q_{i+1}$ are the same. Hence, $\Gamma(Q_i\setminus Q_{i+1})\setminus\mathcal{F}(Q_i \cap Q_{i+1},Q_i)=\Gamma(Q_i\setminus Q_{i+1})\setminus\mathcal{F}(Q_i \setminus Q_{i+1},Q_i)\subseteq \Gamma(Q_i)$, where the last inequality holds as $\mathcal{F}(Q_i\setminus Q_{i+1},Q_i)=\Gamma(Q_i\setminus Q_{i+1})\setminus \Gamma(Q_i)$. Finally, we have $\Gamma(Q_i\setminus Q_{i+1})\setminus\mathcal{F}(Q_i\cap Q_{i+1},Q_i)\subseteq \Gamma(Q_i)\setminus\mathcal{F}(Q_i\cap Q_{i+1},Q_{i+1})$, since each $e\in \Gamma(Q_i\setminus Q_{i+1})$ is such that $e\not\in \mathcal{F}(Q_i\cap Q_{i+1},Q_{i+1})$.

Finally, we can do the same analysis as above, but replace $Q_{i-1}$ by $Q_{i+1}$ and vice versa. Then, we obtain $\Gamma(Q_i\cap Q_{i-1})\setminus \mathcal{F}(Q_i\cap Q_{i-1},Q_i)\subseteq \Gamma(Q_i)\setminus\mathcal{F}(Q_i\cap Q_{i+1},Q_{i+1})$ and  $\Gamma(Q_i\setminus Q_{i-1})\setminus \mathcal{F}(Q_i\cap Q_{i-1},Q_i)\subseteq \Gamma(Q_i)\setminus \mathcal{F}(Q_i\cap Q_{i-1},Q_{i-1})$. 
\end{proof}

\bibliography{references.bib} 

\end{document}